\title{Toward partial Verma functors of $\mathcal{U}^{[r]}(\mathfrak{g})$ and related results}
\author{Pablo Boixeda Alvarez}
\newtheorem{thm}{Theorem}
\newtheorem{conj}{Conjecture}
\newtheorem{prop}{Proposition}
\newtheorem{lem}{Lemma}
\newtheorem{rmk}{Remark}\theoremstyle{remark}
\begin{document}

\begin{abstract}
This note extends the Steinberg Tensor product theorem from the Frobenius kernel $G_{(r)}$ to the deformation $\mathcal{U}^{[r]}(\mathfrak{g})$ of its distribution algebra.
As a Corollary we proof some conjectures from \cite{Wes}. Further it describes the graded representation theory of $\mathcal{U}^{[r]}(\mathfrak{g})$ at a generic semisimple p-central character as a twist of the category of graded $G_{(r-1)}$-modules. We conclude by explaining this relation for $SL_2$ explicitely as well as computing the center in this case.
\end{abstract}
\maketitle
\section{Introduction}
Let $\mathfrak{g}$ be a semisimple Lie algebra and consider the algebra $\mathcal{U}^{[r]}(\mathfrak{g})$ of divided powers up to degree $p^{r-1}$. Let $\mathfrak{b}$ be a Borel subalgebra.\\
$\mathcal{U}^{[r]}(\mathfrak{g})$ has a central subalgebra isomorphic to $k[\mathfrak{g}^{(r)}]$, which can be thought of as given by the $p^{r}$ divided powers multiplied by p, or in the case of nilpotent elements just the pth power of the $p^{r-1}$ divided power. For $\chi\in\mathfrak{g}^{(r)}$ denote by $\mathcal{U}_\chi^{[r]}(\mathfrak{g})$ the corresponding central reduction.\\
 There are maps $\mathcal{U}^{[r]}(\mathfrak{g})\rightarrow\mathcal{U}^{[r+1]}(\mathfrak{g})$ which factors through an inclusion $\mathcal{U}_0^{[r]}(\mathfrak{g})\hookrightarrow\mathcal{U}^{[r+1]}(\mathfrak{g})$
We can thus consider modules over $\mathcal{U}^{[R]}(\mathfrak{g})$ as modules over $\mathcal{U}^{[r]}(\mathfrak{g})$ if $R>r$.\\
Further we have the Frobenius map $F:\mathcal{U}^{[r]}(\mathfrak{g})\rightarrow \mathcal{U}^{[r-1]}(\mathfrak{g}^{(1)})$. If $V$ is a $\mathcal{U}^{[r-1]}(\mathfrak{g}^{(1)})$ module we denote by $V^{(1)}$ the same module considered as an $\mathcal{U}^{[r]}(\mathfrak{g})$ module. Similalry denote by $V^{(i)}$ the modules defined through pullback along $F^i$.\\
The goal of this small note is to classify the simple representations of $\mathcal{U}^{[r]}(\mathfrak{g})$ in terms of simple $\mathcal{U}(\mathfrak{g})$ representations and explain a relation between representations of $\mathcal{U}_\chi^{[r+1]}(\mathfrak{g})$ and $\mathcal{U}_0^{[r]}(\mathfrak{g})$, for a generic $\chi$. We conclude the paper with some explicit computations in $SL_2$ including a computation of the center.\\
\subsection{Acknowledgments} I want to thank Roman Bezrukavnikov for suggesting this project, for a lot of conversations about the topic and for giving me great suggestions to improve all my work.
\section{Steinberg Tensor Product for $\mathcal{U}^{[r]}(\mathfrak{g})$}
We define the baby Vermas. Let $\chi$ and $\mathfrak{b}$ be such that  $\chi|_\mathfrak{n}=0$, where $\mathfrak{n}$ denotes the nilpotent radical of $\mathfrak{b}$. Similarly choose $\mathfrak{h}$ a Cartan subalgebra of $\mathfrak{b}$. Given a $\mathcal{U}_\chi(\mathfrak{h})$ representation $M$ we can define a $\mathcal{U}_\chi(\mathfrak{b})$ representation by letting $\mathfrak{n}$ act by 0, which is well defined since $\chi(\mathfrak{n})=0$. Denote the set of $\lambda:\mathfrak{h}\rightarrow k$ a linear map, such that $\lambda(h)^p-\lambda(h)=\chi(h)^p$ by $\Lambda_\chi^{1}$ (here we use the identification $\mathfrak{g}=\mathfrak{g}^*$ using the Killing form, to evaluate $\chi(h)$). Similarly define $\Lambda_\chi^r$ as the set of characters of $\mathcal{U}^{[r]}_\chi(\mathfrak{h})$. We can define a representation $k_\lambda^\chi$ of $\mathcal{U}^{[r]}_\chi(\mathfrak{h})$, by acting through $\lambda\in\Lambda_\chi^r$. Denote by $Z^{r,\chi}_\lambda:=\mathcal{U}^{[r]}_\chi(\mathfrak{g})\otimes_{\mathcal{U}^{[r]}_\chi(\mathfrak{b})}k_ \lambda^\chi$. By well known results there is a unique simple quotient of this module, which we denote by $L^\chi_\lambda$. Denote by $\Lambda^r:=\Lambda^r_0$ and $L_\lambda:=L^0_\lambda$. Further note that there is a map $\Lambda^R_\chi\rightarrow\Lambda^r$ given by restriction and further this map splits cannonically $\Lambda^r\subset\Lambda^R$ for $\chi=0$, given by the characters that act by 0 on higher divided powers.

The following result follows immediately from \cite{Jan}, II 3.15, as the representations of $\mathcal{U}_0^{[r]}(\mathfrak{g})$ are the same as $G_{(r)}$ representation
\begin{prop}
	A simple module $L_\lambda$ of $\mathcal{U}_0^{[R]}(\mathfrak{g})$ for $\lambda\in \Lambda^r$ remains simple when restricted to $\mathcal{U}_0^{[r]}(\mathfrak{g})$ 
\end{prop}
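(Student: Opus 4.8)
\medskip
\noindent\emph{Proof outline.}
The plan is to transport the statement into the representation theory of Frobenius kernels and then quote \cite{Jan}, II.3.15. The first step is to make precise the dictionary already alluded to in the text: for every $m$ the category of $\mathcal{U}_0^{[m]}(\mathfrak{g})$-modules coincides with the category of $G_{(m)}$-modules, i.e.\ modules over the algebra of distributions of the $m$-th Frobenius kernel, and the canonical inclusion $\mathcal{U}_0^{[r]}(\mathfrak{g})\hookrightarrow\mathcal{U}^{[R]}(\mathfrak{g})$ through which $\mathcal{U}^{[r]}(\mathfrak{g})\to\mathcal{U}^{[R]}(\mathfrak{g})$ factors is dual to the closed embedding of Frobenius kernels $G_{(r)}\hookrightarrow G_{(R)}$. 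Consequently, restricting a $\mathcal{U}_0^{[R]}(\mathfrak{g})$-module to $\mathcal{U}_0^{[r]}(\mathfrak{g})$ is precisely restriction of a $G_{(R)}$-module along $G_{(r)}\hookrightarrow G_{(R)}$. In the same step I would settle the weight bookkeeping: $\Lambda^m$, being the set of characters of $\mathcal{U}_0^{[m]}(\mathfrak{h})$ — the distribution algebra of the Frobenius kernel $T_{(m)}$ of the maximal torus $T$ — is $X(T)/p^m X(T)$, the canonical splitting $\Lambda^r\subset\Lambda^R$ is the standard inclusion of the $p^r$-restricted weights into the $p^R$-restricted ones, and under all of this $L_\lambda$, the head of the baby Verma $Z^{R,0}_\lambda$, corresponds to the simple $G_{(R)}$-module of highest weight $\lambda$.

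With the dictionary in place, the assertion becomes: for a $p^r$-restricted weight $\lambda$ the restriction to $G_{(r)}$ of the simple $G_{(R)}$-module $L_\lambda$ is simple. By the Frobenius-kernel Steinberg theorem, $L_\lambda$ is the restriction to $G_{(R)}$ of the simple rational $G$-module $L(\lambda)$ (using that a $p^r$-restricted $\lambda$ is a fortiori $p^R$-restricted), and more generally, by the Steinberg tensor product theorem, $L(\lambda)|_{G_{(r)}}\cong L(\lambda_0)\otimes L(\lambda_1)^{(1)}\otimes\cdots\otimes L(\lambda_{r-1})^{(r-1)}$, where $\lambda=\sum_{i=0}^{r-1}p^i\lambda_i$ with each $\lambda_i$ restricted. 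Hence $L_\lambda|_{G_{(r)}}=L(\lambda)|_{G_{(r)}}$, and \cite{Jan}, II.3.15 states precisely that for a $p^r$-restricted $\lambda$ this module is the simple $G_{(r)}$-module of highest weight $\lambda$; in particular it is simple. This is the statement of the Proposition.

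I expect essentially all of the work to sit in the first paragraph — identifying the central reduction at $0$ of $\mathcal{U}^{[m]}(\mathfrak{g})$ with the distribution algebra of $G_{(m)}$, the transition maps with the inclusions of Frobenius kernels, and the two sets of weight data — since the representation-theoretic content is supplied entirely by \cite{Jan}. Once the dictionary is fixed nothing further needs to be done; alternatively one could argue directly in the divided-power formalism by comparing $Z^{R,0}_\lambda|_{\mathcal{U}_0^{[r]}(\mathfrak{g})}$ with the baby Verma $Z^{r,0}_\lambda$ over $\mathcal{U}_0^{[r]}(\mathfrak{g})$ and tracking highest weights, but this would merely reprove II.3.15, so I would not take that route.
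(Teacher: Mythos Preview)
Your proposal is correct and follows exactly the paper's approach: the paper simply remarks that the result follows immediately from \cite{Jan}, II~3.15, once one identifies $\mathcal{U}_0^{[m]}(\mathfrak{g})$-modules with $G_{(m)}$-modules, and you have spelled out precisely that dictionary. The extra detail you give (matching the transition maps and the weight sets) is helpful but not required beyond what the paper assumes.
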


\begin{thm}
	The simple representations of $\mathcal{U}^{[r]}(\mathfrak{g})$ are 
	$$L_{\lambda_1}\otimes L_{\lambda_2}^{(1)}...\otimes (L^{\chi}_{\lambda_r})^{(r-1)}$$
	for $\lambda_i\in \Lambda^1$ $i\neq r$ and $\lambda_r\in\Lambda^1_\chi$.
\end{thm}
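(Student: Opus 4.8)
The plan is to argue by induction on $r$, exploiting the structural fact that $u(\mathfrak{g}):=\mathcal{U}_0^{[1]}(\mathfrak{g})$ (the restricted enveloping algebra) sits inside $\mathcal{U}^{[r]}(\mathfrak{g})$ as a normal Hopf subalgebra, with quotient $\mathcal{U}^{[r]}(\mathfrak{g})/(\mathcal{U}^{[r]}(\mathfrak{g})\cdot u(\mathfrak{g})^{+})\cong\mathcal{U}^{[r-1]}(\mathfrak{g}^{(1)})$ realised by the Frobenius map $F$, where $u(\mathfrak{g})^{+}$ is the augmentation ideal; moreover this identification carries the central subalgebra $k[\mathfrak{g}^{(r)}]\subseteq Z(\mathcal{U}^{[r]}(\mathfrak{g}))$ isomorphically onto $k[\mathfrak{g}^{(r)}]\subseteq Z(\mathcal{U}^{[r-1]}(\mathfrak{g}^{(1)}))$. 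The base case $r=1$ is exactly the statement recalled above that every simple $\mathcal{U}^{[1]}_\chi(\mathfrak{g})$-module is a quotient of a baby Verma $Z^{1,\chi}_\lambda$, so the simples are the $L^\chi_\lambda$, $\lambda\in\Lambda^1_\chi$. Since conjugation by $G$ identifies $\mathcal{U}^{[r]}_\chi(\mathfrak{g})$ with $\mathcal{U}^{[r]}_{g\chi}(\mathfrak{g})$ and every element of $\mathfrak{g}$ is $G$-conjugate into a Borel, we may assume $\chi|_\mathfrak{n}=0$ throughout.

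The main technical ingredient, which needs no induction, is a Clifford-theory lemma: for every $\lambda_1\in\Lambda^1$ and every simple $\mathcal{U}^{[r-1]}(\mathfrak{g}^{(1)})$-module $W$, the $\mathcal{U}^{[r]}(\mathfrak{g})$-module $L_{\lambda_1}\otimes W^{(1)}$ is simple. Indeed, $W^{(1)}$ is inflated along $F$, so $u(\mathfrak{g})$ acts on it through the counit and $(L_{\lambda_1}\otimes W^{(1)})|_{u(\mathfrak{g})}\cong L_{\lambda_1}^{\oplus\dim W}$ is isotypic of type $L_{\lambda_1}$, which is simple over $u(\mathfrak{g})$ by the preceding Proposition. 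Consequently every $\mathcal{U}^{[r]}(\mathfrak{g})$-submodule is, as a $u(\mathfrak{g})$-module, of the form $L_{\lambda_1}\otimes U$ for a subspace $U\subseteq W^{(1)}$; applying the functor $\mathrm{Hom}_{u(\mathfrak{g})}(L_{\lambda_1},-)$, which lands in $\mathcal{U}^{[r-1]}(\mathfrak{g}^{(1)})$-modules by normality, together with Schur's lemma and a dimension count, identifies $U$ with a $\mathcal{U}^{[r-1]}(\mathfrak{g}^{(1)})$-submodule of $W$, hence $U=0$ or $U=W^{(1)}$. Combined with the inductive hypothesis, which makes $L_{\lambda_2}\otimes\dots\otimes(L^\chi_{\lambda_r})^{(r-2)}$ a simple $\mathcal{U}^{[r-1]}(\mathfrak{g}^{(1)})$-module, this already proves that all the modules in the statement are simple.

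For the converse, let $L$ be a simple $\mathcal{U}^{[r]}(\mathfrak{g})$-module with $p$-central character $\chi$, normalised so that $\chi|_\mathfrak{n}=0$. The socle of $L|_{u(\mathfrak{g})}$ is stable under $\mathcal{U}^{[r]}(\mathfrak{g})$ by normality, hence equals $L$; thus $L|_{u(\mathfrak{g})}$ is semisimple, and we may choose a simple summand $L_{\lambda_1}$ with $\lambda_1\in\Lambda^1$. Then $M:=\mathrm{Hom}_{u(\mathfrak{g})}(L_{\lambda_1},L)=(L\otimes L_{\lambda_1}^{*})^{u(\mathfrak{g})}$ is a nonzero $\mathcal{U}^{[r-1]}(\mathfrak{g}^{(1)})$-module carrying the central character $\chi$, because $F$ matches up the two copies of $k[\mathfrak{g}^{(r)}]$. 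Choose a simple $\mathcal{U}^{[r-1]}(\mathfrak{g}^{(1)})$-submodule $W\subseteq M$. The evaluation map $L_{\lambda_1}\otimes W^{(1)}\to L$, $v\otimes f\mapsto f(v)$, is $\mathcal{U}^{[r]}(\mathfrak{g})$-equivariant and nonzero; its source is simple by the lemma and its target is simple, so it is an isomorphism. Applying the inductive hypothesis to $\mathfrak{g}^{(1)}$, $r-1$ and the character $\chi$ gives $W\cong L_{\lambda_2}\otimes\dots\otimes(L^\chi_{\lambda_r})^{(r-2)}$ with $\lambda_i\in\Lambda^1$ for $i<r$ and $\lambda_r\in\Lambda^1_\chi$, and pulling back along $F$ yields $L\cong L_{\lambda_1}\otimes L_{\lambda_2}^{(1)}\otimes\dots\otimes(L^\chi_{\lambda_r})^{(r-1)}$, as desired.

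The step I expect to be the main obstacle is purely structural and lies underneath the representation-theoretic argument: one must verify that $u(\mathfrak{g})$ really is a normal Hopf subalgebra of the deformed algebra $\mathcal{U}^{[r]}(\mathfrak{g})$ with Frobenius quotient $\mathcal{U}^{[r-1]}(\mathfrak{g}^{(1)})$, and in particular that $F$ restricts to an isomorphism of the central subalgebras $k[\mathfrak{g}^{(r)}]$, so that $p$-central characters propagate correctly down the Frobenius tower. Once this dictionary with the distribution algebras of the Frobenius kernels $G_{(i)}$ is established, the remaining ingredients — the socle argument, the functoriality of $u(\mathfrak{g})$-invariants, Schur's lemma and the dimension counts — are routine.
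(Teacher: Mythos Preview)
Your argument is correct and follows the same Clifford--theoretic template as the paper: pick a simple constituent over a normal subalgebra, form the Hom space as a module over the quotient, and use the evaluation map together with Schur's lemma. The one genuine organisational difference is \emph{which} normal subalgebra you use. You peel from the bottom, taking $u(\mathfrak{g})=\mathcal{U}_0^{[1]}(\mathfrak{g})\subset\mathcal{U}^{[r]}(\mathfrak{g})$ with quotient $\mathcal{U}^{[r-1]}(\mathfrak{g}^{(1)})$, and then apply the induction hypothesis to the quotient; the paper peels from the top, taking $\mathcal{U}_0^{[r-1]}(\mathfrak{g})\subset\mathcal{U}^{[r]}(\mathfrak{g})$ with quotient $\mathcal{U}(\mathfrak{g}^{(r-1)})$, so that the inner factor is handled by the classical Steinberg theorem for $G_{(r-1)}$ and only the outermost $\mathcal{U}_\chi(\mathfrak{g}^{(r-1)})$-factor carries $\chi$. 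The paper's slicing has the advantage that the required structural input is lighter: one only needs the inclusion $\mathcal{U}_0^{[r-1]}(\mathfrak{g})\hookrightarrow\mathcal{U}^{[r]}(\mathfrak{g})$ already recorded in the introduction, together with Proposition~1 in its full strength, and no tracking of how $\chi$ passes through a Frobenius quotient. Your slicing, by contrast, uses Proposition~1 only for $r=1$ but trades this for the structural fact you flag at the end, that $F$ realises $\mathcal{U}^{[r-1]}(\mathfrak{g}^{(1)})$ as $\mathcal{U}^{[r]}(\mathfrak{g})/\mathcal{U}^{[r]}(\mathfrak{g})u(\mathfrak{g})^{+}$ compatibly with the $p$-centres; once that is in place the two proofs are interchangeable. (A small cosmetic point: your claim that the $u(\mathfrak{g})$-socle of $L$ is $\mathcal{U}^{[r]}(\mathfrak{g})$-stable is not actually needed---you only use a simple $u(\mathfrak{g})$-\emph{sub}module of $L$, which exists for free, and then a simple submodule $W$ of the Hom space.)
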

\begin{proof}
	Let $V$ be a simple representation of $\mathcal{U}^{[r]}(\mathfrak{g})$. Let $L$ be a simple summand of the socle of $V$ as a $\mathcal{U}_0^{[r-1]}(\mathfrak{g})$ representation. By the above proposition $L$ extends to a simple $\mathcal{U}^{[r]}(\mathfrak{g})$ representation and we can consider the representation $W=Hom_{\mathcal{U}_0^{[r-1]}(\mathfrak{g})}(L,V)$, which factors as a $\mathcal{U}(\mathfrak{g}^{(r-1)})$ representation. By the choice of $L$ this is a non-zero representation. Further we can define a map $W\otimes L\rightarrow V$, which is injective, because $L$ is simple, and as $W$ is non-zero and $V$ is simple, we have this map is an isomorphism. Further as $V$ is simple it follows from this isomorphism that $W$ has to be simple. 
	Conversely assume $L$ is a simple as in the Proposition and let $W$ be a simple representation of $\mathcal{U}(\mathfrak{g}^{(r-1)})$, we will show that $W\otimes L$ is simple. Assume not and let $V\hookrightarrow W\otimes L$ be a simple subrepresentation. We apply the above construction to $V$. Restricted to $\mathcal{U}_o^{[r-1]}(\mathfrak{g})$, $W\otimes L$ is a direct sum of copies of $L$, so we have $V\cong W'\otimes L$. Then we have
	\begin{align*}
		Hom_{\mathcal{U}^{[r]}(\mathfrak{g})}(V,W\otimes L)&=Hom_{\mathcal{U}(\mathfrak{g}^{(r-1)})}(k,Hom_{\mathcal{U}_0^{[r-1]}(\mathfrak{g})}(W'\otimes L,W\otimes L))\\&=Hom_{\mathcal{U}(\mathfrak{g}^{(r-1)})}(k,Hom(W',W))\\&=Hom_{\mathcal{U}(\mathfrak{g}^{(r-1)})}(W',W)
	\end{align*}
	And as this is a non-zero vector space and $W'$ and $W$ are simples, we get $W'$ is isomorphic to $W$, hence we must have $V=W$ and hence $W$ is already simple. Now the results follows by induction.
\end{proof}

Next we prove the Conjectures stated in \cite{Wes}. 

\begin{thm}
	Let $N$ be an irreducible $Dist(G_{(r-1)})$-module with corresponding weight $\lambda_{r-1}\in\Lambda^{r-1}_0$. Let $\widehat{\mathcal{U}^{[r]}_\chi (\mathfrak{b})}$
	be the subalgebra of $\mathcal{U}_\chi^{[r]}(\mathfrak{g})$ generated by $Dist(G_{(r-1)})$ and $\mathcal{U}^{[r]}_\chi (\mathfrak{b})$. Then each extension of $\lambda_{r-1}$ to $\lambda_r\in\Lambda^r_\chi$ determines an irreducible $\widehat{\mathcal{U}^{[r]}_\chi (\mathfrak{b})}$-module structure on the $Dist(G_{(r-1)})$-module N, and every irreducible $\widehat{\mathcal{U}^{[r]}_\chi (\mathfrak{b})}$-module restricts to an irreducible $Dist(G_{(r-1)})$-module.
\end{thm}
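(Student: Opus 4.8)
The plan is to peel off the two extra layers of $\widehat{\mathcal{U}^{[r]}_\chi(\mathfrak{b})}$ over $Dist(G_{(r-1)})$ one at a time: first the divided powers $e_\beta^{(p^{r-1})}$ ($\beta\in\Phi^+$) that span the top of $\mathcal{U}^{[r]}_\chi(\mathfrak{n})$, and then the divided powers $\binom{h_i}{p^{r-1}}$ that span the top of $\mathcal{U}^{[r]}_\chi(\mathfrak{h})$. First I would record the structure of $\widehat{\mathcal{U}^{[r]}_\chi(\mathfrak{b})}$: from the PBW theorem for $\mathcal{U}^{[r]}_\chi(\mathfrak{g})$ one gets that it is a free left and right $Dist(G_{(r-1)})$-module on the monomials in $\{e_\beta^{(p^{r-1})}\}_{\beta\in\Phi^+}$ and $\{\binom{h_i}{p^{r-1}}\}_i$ of exponent $<p$. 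Two facts come out of Chevalley's commutation rules for divided powers: each $e_\beta^{(p^{r-1})}$ normalizes $Dist(G_{(r-1)})$, since every bracket of $e_\beta^{(p^{r-1})}$ with a generator of $Dist(G_{(r-1)})$ only involves divided powers of degree $<p^{r-1}$; and $(e_\beta^{(p^{r-1})})^p$ equals, up to a unit (Kummer/Lucas for the relevant multinomial coefficient), $p$ times the $p^r$-th divided power of $e_\beta$, which is the coordinate of the central subalgebra $k[\mathfrak{g}^{(r)}]$ attached to the nilpotent vector $e_\beta$, hence vanishes in $\mathcal{U}^{[r]}_\chi(\mathfrak{g})$ because $\chi|_\mathfrak{n}=0$. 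So $(e_\beta^{(p^{r-1})})^p=0$.

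Second, I would note that $\mathcal{U}^{[r]}_\chi(\mathfrak{h})$ is étale: it is generated by $Dist(T_{(r-1)})$ together with the $\binom{h_i}{p^{r-1}}$, each of which satisfies a separable relation of the form $x^p-x=(\text{value of }\chi)$; in particular $\Lambda^r_\chi=\mathrm{Spec}\,\mathcal{U}^{[r]}_\chi(\mathfrak{h})$ is reduced and the fibre of $\Lambda^r_\chi\to\Lambda^{r-1}_0$ over any $\lambda_{r-1}$ has exactly $p^{\ell}$ points, $\ell=\operatorname{rk}\mathfrak{g}$. I would then show that the two-sided ideal $J\subseteq\widehat{\mathcal{U}^{[r]}_\chi(\mathfrak{b})}$ generated by the $e_\beta^{(p^{r-1})}$ is nilpotent, so that $J\subseteq\mathrm{rad}\,\widehat{\mathcal{U}^{[r]}_\chi(\mathfrak{b})}$ and $J$ annihilates every simple module. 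Inside the honest Borel $\mathcal{U}^{[r]}_\chi(\mathfrak{b})=\mathcal{U}^{[r]}_\chi(\mathfrak{n})\rtimes\mathcal{U}^{[r]}_\chi(\mathfrak{h})$ the corresponding ideal $J'$ is nilpotent, because $\mathcal{U}^{[r]}_\chi(\mathfrak{n})=Dist(N_{(r)})$ is a local algebra (using $\chi|_\mathfrak{n}=0$) with nilpotent augmentation ideal, normalized by $\mathcal{U}^{[r]}_\chi(\mathfrak{h})$. Using the normalizing property of the $e_\beta^{(p^{r-1})}$ and the fact that the remaining generators move $Dist(G_{(r-1)})$ into $Dist(G_{(r-1)})\cdot\mathcal{U}^{[r]}_\chi(\mathfrak{b})$, one gets $\widehat{\mathcal{U}^{[r]}_\chi(\mathfrak{b})}=Dist(G_{(r-1)})\cdot\mathcal{U}^{[r]}_\chi(\mathfrak{b})$ and then $J=Dist(G_{(r-1)})\cdot J'$, so $J^{m}\subseteq Dist(G_{(r-1)})\cdot(J')^{m}=0$ for $m\gg0$. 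It then suffices to classify simple modules of $B:=\widehat{\mathcal{U}^{[r]}_\chi(\mathfrak{b})}/J$, which is a quotient of the subalgebra generated by $Dist(G_{(r-1)})$ and $\mathcal{U}^{[r]}_\chi(\mathfrak{h})$.

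Finally I would match simple $B$-modules with pairs $(N,\lambda_r)$. Given $N$ with weight $\lambda_{r-1}$ and an extension $\lambda_r$, I would define a $B$-action on $N$ by letting $\binom{h_i}{p^{r-1}}$ act by the scalar $\lambda_r\!\left(\binom{h_i}{p^{r-1}}\right)$: consistency is forced by comparison with the $\mathcal{U}^{[r]}_0(\mathfrak{g})$-structure on $N=L_{\lambda_{r-1}}$, in which $\binom{h_i}{p^{r-1}}$ acts by $0$, since the relation $\big[\binom{h_i}{p^{r-1}},e_\alpha\big]=e_\alpha z_{i\alpha}$ with $z_{i\alpha}\in Dist(T_{(r-1)})$ shows $z_{i\alpha}$ kills $e_\alpha(N_\mu)$ for every weight $\mu$, and the weights of the simple module $N$ are connected under the root vectors, so one scalar works uniformly; the separable relation holds because $\lambda_r$ is a character of $\mathcal{U}^{[r]}_\chi(\mathfrak{h})$. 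This module restricts to the simple $Dist(G_{(r-1)})$-module $N$, hence is simple over $B$, and $\lambda_r$ is recovered as the $\mathcal{U}^{[r]}_\chi(\mathfrak{h})$-character on the highest weight line, so $(N,\lambda_r)\mapsto N^{(\lambda_r)}$ is injective. Conversely, for $V$ simple over $B$ write $V=\bigoplus_\lambda V^{\lambda}$ with $\mathcal{U}^{[r]}_\chi(\mathfrak{h})$ acting by the scalar $\lambda$ on $V^{\lambda}$; for $0\neq v\in V^{\lambda}$ one has $Bv=Dist(G_{(r-1)})\,\mathcal{U}^{[r]}_\chi(\mathfrak{h})\,v=Dist(G_{(r-1)})v$, so $V$ is cyclic over $Dist(G_{(r-1)})$. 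Filtering $V$ by the $Dist(G_{(r-1)})$-submodules generated by $\big(\prod_i\binom{h_i}{p^{r-1}}^{b_i}\big)v$ and using the commutation relations, one sees every subquotient is a simple quotient of $Dist(G_{(r-1)})v$, so all composition factors of $V|_{Dist(G_{(r-1)})}$ are one fixed simple $N$; a dimension count against the $N^{(\lambda_r)}$ above (whose squared dimensions sum over all $(N,\lambda_r)$ to $p^{\ell}\sum_N(\dim N)^2$) forces $V|_{Dist(G_{(r-1)})}=N$ and $V=N^{(\lambda_r)}$, proving the bijection and the restriction statement.

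The step I expect to be the main obstacle is the last one: showing that the restriction of a simple $B$-module to $Dist(G_{(r-1)})$ is genuinely simple rather than merely isotypic of some multiplicity $>1$. Nilpotency of $J'$ together with the $\mathcal{U}^{[r]}_\chi(\mathfrak{h})$-grading only yields a filtration with all composition factors isomorphic, and ruling out higher multiplicity requires either the weight-graph connectedness bookkeeping above combined with an exact dimension count, or a direct argument that every $Dist(G_{(r-1)})$-submodule of $V$ is automatically stable under $\mathcal{U}^{[r]}_\chi(\mathfrak{h})$. A secondary point needing care is the purely combinatorial input — the normalizing property of the $e_\beta^{(p^{r-1})}$ and the identity $(e_\beta^{(p^{r-1})})^p=0$ — which rests on Lucas/Kummer estimates and the Chevalley divided-power relations.
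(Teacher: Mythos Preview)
Your route is genuinely different from the paper's, and the argument for nilpotency of $J$ has a real gap.

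The paper does not analyze $\widehat{\mathcal{U}^{[r]}_\chi(\mathfrak{b})}$ structurally at all. It simply reruns the Steinberg tensor product argument of the preceding theorem with $\mathfrak{b}$ in place of $\mathfrak{g}$: given a simple $V$, pick a simple $Dist(G_{(r-1)})$-summand $L$ of the socle, extend $L$ to a $\mathcal{U}^{[r]}_0(\mathfrak{g})$-module via Proposition~1, and form $W=\operatorname{Hom}_{Dist(G_{(r-1)})}(L,V)$; normality of $G_{(r-1)}$ in $G_{(r)}$ makes the $\widehat{\mathcal{U}^{[r]}_\chi(\mathfrak{b})}$-action on $W$ factor through the Frobenius quotient $\mathcal{U}_\chi(\mathfrak{b}^{(r-1)})$, and $V\cong L\otimes W^{(r-1)}$. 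Since $\chi|_\mathfrak{n}=0$, the simple $\mathcal{U}_\chi(\mathfrak{b}^{(r-1)})$-modules are the one-dimensional $k^\chi_\lambda$. The construction half is just $L_{\lambda_{r-1}}\otimes k^\chi_{\lambda_r}$, with $k^\chi_{\lambda_r}$ pulled back along the same Frobenius map. No radical computation, no dimension count.

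Your claim $J=Dist(G_{(r-1)})\cdot J'$ fails because the right-hand side is only a \emph{left} ideal. The normalizing property $[e_\beta^{(p^{r-1})},Dist(G_{(r-1)})]\subseteq Dist(G_{(r-1)})$ gives $e_\beta^{(p^{r-1})}d=de_\beta^{(p^{r-1})}+d'$ with $d'\in Dist(G_{(r-1)})$, but $d'$ need not lie in $Dist(G_{(r-1)})\cdot J'$. Concretely, for $\mathfrak{sl}_2$ with $r=2$: in the PBW basis $f^{(a)}\binom{h}{c}e^{(b)}$ every element of $Dist(G_{(1)})\cdot J'$ has $b\ge p$, whereas $[e^{(p)},f]$ is a nonzero element of $J$ with $b=p-1$. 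So $J\supsetneq Dist(G_{(r-1)})\cdot J'$ and the chain $J^m\subseteq Dist(G_{(r-1)})\cdot(J')^m$ is unjustified. The statement $J\subseteq\operatorname{rad}$ is true, but the clean noncircular proof is exactly to classify the simples first via the Frobenius/Steinberg mechanism and then observe each $e_\beta^{(p^{r-1})}$ kills $L\otimes k^\chi_\lambda$ --- which is the paper's order of operations. Your self-flagged ``last step'' is a second genuine gap for the same reason: the dimension count needs $\dim(B/\operatorname{rad}B)$, which you have no independent handle on, while in the paper's tensor-product description the restriction to $Dist(G_{(r-1)})$ is visibly $L$.
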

\begin{proof}
	By the above any irreducible of $Dist(G_{(r-1)})$ extends to $L_{\lambda_{r-1}}$ a $\mathcal{U}_0^{[r]}(G)$-module and so restrict to a $\widehat{\mathcal{U}^{[r]}_0 (\mathfrak{b})}$-module. Now we have a map $$\widehat{\mathcal{U}^{[r]}_\chi (\mathfrak{b})}\rightarrow \mathcal{U}_\chi(\mathfrak{b})$$
	so as $\chi|_\mathfrak{n}=0$ we get a module $k_{\lambda_r}^\chi$ of $\widehat{\mathcal{U}^{[r]}_\chi (\mathfrak{b})}$, by pullback along the above map. Now consider
	$$N=L_{\lambda_r}\otimes k_{\lambda_r}^\chi$$
	will satisfy the necessary properties.\\
	To prove that all irreducibles are of the above form, we note that the above proof of Steinberg's tensor product theorem, extends to $\widehat{\mathcal{U}^{[r]} (\mathfrak{b})}$ and hence we just need to classify the irreducible modules of $\mathcal{U}_\chi (\mathfrak{b}^{(r-1)})$ and as $\mathfrak{n}$ acts nilpotently the irreducible representations are given by $k_\lambda^\chi$ for $\lambda\in\Lambda^1_\chi$
\end{proof}

Note now that $Z^{[r]}_\chi(N,\lambda_r):=\mathcal{U}_\chi^{[r]}(\mathfrak{g})\otimes_{\widehat{\mathcal{U}^{[r]}_\chi (\mathfrak{b})}}N=(Z^\chi_{\lambda_r})^{(r)}\otimes L_{\lambda_{r-1}}$. Now the fact that all irreducibles are the image of these modules follows from the Steinberg tensor product theorem and the same result for the enveloping algebra.
\\
Further assume that $\chi$ is semisimple and assume $\chi\in\mathfrak{h}$ for some Cartan subalgebra $\mathfrak{h}$. Let $T$ be the torus corresponding to that Cartan subalgebra. Note that we have a $T$ action on this algebra as $\chi$ is fixed by the action of $T$ and so we can defined a graded category $\mathcal{U}^{[r]}_0(\mathfrak{g})-mod^{gr}$ of modules of $\mathcal{U}^{[r]}_0(\mathfrak{g})$ with a grading given by $T$ compatible with the $T$ action on $\mathcal{U}^{[r]}_0(\mathfrak{g})$. We extend the above proven conjecture to the following.
\begin{conj}
	The above assignment on simples and a weight $\lambda$ given by $(L,\lambda)\mapsto Z^{(r)}_\chi(\lambda)\otimes L$ (or further similarly defined on modules that come under restriction of $\mathcal{U}^{[r]}(\mathfrak{g})$) extends to a functor
	$$F:\mathcal{U}^{[r]}_0(\mathfrak{g})-mod^{gr}\rightarrow\mathcal{U}^{[r+1]}_\chi(\mathfrak{g})-mod^{gr}$$
\end{conj}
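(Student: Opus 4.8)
The plan is to realize $F$ as the twisted induction functor already present --- for a single fixed weight --- in the ``Note'' following Theorem 2, with that one weight promoted to the variable datum carried by the $T$-grading. Write $\widehat{\mathcal{U}^{[r+1]}_\chi(\mathfrak{b})}$ for the subalgebra of $\mathcal{U}^{[r+1]}_\chi(\mathfrak{g})$ generated by $Dist(G_{(r)})=\mathcal{U}^{[r]}_0(\mathfrak{g})$ and by $\mathcal{U}^{[r+1]}_\chi(\mathfrak{b})$, in exact analogy with Theorem 2. For $M=\bigoplus_\mu M_\mu\in\mathcal{U}^{[r]}_0(\mathfrak{g})-mod^{gr}$ I would equip the underlying space of $M$ with a $\widehat{\mathcal{U}^{[r+1]}_\chi(\mathfrak{b})}$-module structure $M^{\mathrm{tw}}$ as follows: let $Dist(G_{(r)})$ act as it does on $M$; let the remaining divided-power generators (those of order $p^r$), which span a copy of $\mathcal{U}_\chi(\mathfrak{b}^{(r)})$ sitting over $Dist(G_{(r)})$ inside $\widehat{\mathcal{U}^{[r+1]}_\chi(\mathfrak{b})}$, act through the quotient $\mathcal{U}_\chi(\mathfrak{b}^{(r)})\twoheadrightarrow\mathcal{U}_\chi(\mathfrak{h}^{(r)})$ (so $\mathfrak{n}^{(r)}$ acts by $0$, which is allowed since $\chi|_\mathfrak{n}=0$), with $\mathcal{U}_\chi(\mathfrak{h}^{(r)})$ acting on the homogeneous piece $M_\mu$ by the character $\lambda_\chi(\mu)\in\Lambda^1_\chi$ obtained by untwisting $\mu$ through $r$ Frobenii and shifting by $\chi$. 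Then define
\[
F(M):=\mathcal{U}^{[r+1]}_\chi(\mathfrak{g})\otimes_{\widehat{\mathcal{U}^{[r+1]}_\chi(\mathfrak{b})}}M^{\mathrm{tw}}.
\]

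I would carry this out in four steps. \emph{(1) Structure of $\widehat{\mathcal{U}^{[r+1]}_\chi(\mathfrak{b})}$.} Record that $Dist(G_{(r)})$ is $\mathrm{ad}$-stable and normal in $\mathcal{U}^{[r+1]}_\chi(\mathfrak{g})$ and in $\widehat{\mathcal{U}^{[r+1]}_\chi(\mathfrak{b})}$, that both overalgebras are free as one-sided $Dist(G_{(r)})$-modules, and that the quotient by the augmentation ideal of $Dist(G_{(r)})$ is $\mathcal{U}_\chi(\mathfrak{g}^{(r)})$, respectively $\mathcal{U}_\chi(\mathfrak{b}^{(r)})$; these are the deformed versions of the usual divided-power ``PBW'' statements for distribution algebras of Frobenius kernels, and the central $\chi$, being supported on the Frobenius-twisted quotient, does not disturb them. \emph{(2) Well-definedness of $M^{\mathrm{tw}}$.} Check that the prescription above is a genuine algebra action. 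The only relations not already visible on the two pieces are the cross relations between $Dist(G_{(r)})$ and the order-$p^r$ torus generators; writing them out, they amount to the statement that conjugation by those generators acts on $Dist(G_{(r)})$ by the grading operator, which holds precisely because $M$ is an object of the graded category, i.e.\ a $T$-equivariant $Dist(G_{(r)})$-module, together with the fact that $\chi\in\mathfrak{h}^{(r)}$ is $T$-fixed; matching the two sides is exactly what forces the definition of the untwisting map $\mu\mapsto\lambda_\chi(\mu)$. \emph{(3) Functoriality.} A morphism in $\mathcal{U}^{[r]}_0(\mathfrak{g})-mod^{gr}$ preserves the grading, hence intertwines the twisted structures, and $\mathcal{U}^{[r+1]}_\chi(\mathfrak{g})\otimes_{\widehat{\mathcal{U}^{[r+1]}_\chi(\mathfrak{b})}}(-)$ is functorial (and exact, by the freeness in step (1)); the residual $T$-action on $\mathcal{U}^{[r+1]}_\chi(\mathfrak{g})$ that defines $\mathcal{U}^{[r+1]}_\chi(\mathfrak{g})-mod^{gr}$ makes $F(M)$ canonically $T$-graded, so $F$ lands in the asserted category, and the same construction run on $\mathcal{U}^{[r]}(\mathfrak{g})-mod^{gr}$ covers the parenthetical case. \emph{(4) Agreement on simples.} For $M=L_\lambda$ with a grading shift, $M^{\mathrm{tw}}$ is, by the classification of $\widehat{\mathcal{U}^{[r+1]}_\chi(\mathfrak{b})}$-irreducibles in Theorem 2, one of the modules $N$ appearing there, so the ``Note'' following Theorem 2 gives $F(L_\lambda)\cong Z^{(r)}_\chi(\lambda)\otimes L$, i.e.\ $(Z^\chi_{\lambda_{r+1}})^{(r)}\otimes L_{\lambda_{\leq r}}$ in the notation there --- exactly the assignment in the statement.

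The hard part will be step (2) and, inside it, pinning down the untwisting dictionary $\mu\mapsto\lambda_\chi(\mu)$ correctly: one must verify that having a $T$-grading compatible with the $G_{(r)}$-action is the same datum as a $\mathcal{U}_\chi(\mathfrak{h}^{(r)})$-action extending the torus part of $Dist(G_{(r)})$ consistently with all of $Dist(G_{(r)})$, and that the output genuinely has $p$-central character $\chi$ rather than a Weyl-conjugate of it. Everything else --- normality, freeness, the tensor identity, functoriality, the graded structure on $F(M)$ --- is formal once the structural input of step (1) is in place. A secondary point: bare existence of $F$ should go through for any semisimple $\chi$, but to upgrade $F$ to a fully faithful functor, or to the equivalence with a twist of $G_{(r)}-mod^{gr}$ promised in the abstract, one will want $\chi$ regular, so that $\mathcal{U}_\chi(\mathfrak{g}^{(r)})$ is semisimple with the baby Vermas $Z^\chi_\lambda$ as its simple projectives; establishing those exactness and essential-surjectivity properties is the natural sequel but is not demanded by the statement as phrased.
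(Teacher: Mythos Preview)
The statement you are proving is labelled a \emph{Conjecture} in the paper, and the paper does \emph{not} give a proof of it. What the paper does instead is: (i) in Section~3, for regular semisimple $\chi$, exhibit an identification of the underlying vector spaces $\mathrm{End}(P)\cong\mathrm{End}(P')$ of the endomorphism rings of projective generators on the two sides, together with an explicit twist $c^\chi_\mu\in\mathcal{U}_0(\mathfrak{n})\otimes\mathcal{U}_0(\mathfrak{n}_-)$ relating the two multiplications; and (ii) in Section~4, for $\mathfrak{g}=\mathfrak{sl}_2$, compute $c^\chi_\mu$ explicitly and show the twisted algebra is isomorphic to the untwisted one, yielding the equivalence in that case only. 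So there is no paper proof to match; you are attempting to settle an open question.

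Your approach has a genuine gap, and it lies exactly where you flagged the ``hard part,'' but not in the sub-step you isolate. In step~(2) you assert that ``the only relations not already visible on the two pieces are the cross relations between $Dist(G_{(r)})$ and the order-$p^r$ torus generators.'' This is false: there are also cross relations between the order-$p^r$ \emph{nilpotent} generators $e_\alpha^{(p^r)}\in\mathcal{U}^{[r+1]}_\chi(\mathfrak{b})$ and the negative root vectors $f_\beta\in Dist(G_{(r)})$, and these obstruct your prescription ``$\mathfrak{n}^{(r)}$ acts by $0$.'' Already for $\mathfrak{sl}_2$ one has, in the hyperalgebra,
\[
e^{(p^r)}f \;=\; f\,e^{(p^r)} \;+\; (h+1)\,e^{(p^r-1)}\qquad(\text{in characteristic }p),
\]
so if $e^{(p^r)}$ acts by $0$ on $M^{\mathrm{tw}}$ then both sides force $(h+1)e^{(p^r-1)}$ to act by $0$ on $M$. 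For a general object $M$ of $\mathcal{U}^{[r]}_0(\mathfrak{g})\text{-}mod^{gr}$ this is simply not true (take $M$ the regular representation, or any indecomposable projective that is not a Steinberg twist). The reason Theorem~2 in the paper succeeds for irreducibles is precisely that each simple $L_{\lambda}$ of $Dist(G_{(r)})$ genuinely \emph{extends} to a $\mathcal{U}^{[r+1]}_0(\mathfrak{g})$-module (Proposition~1), so one obtains a $\widehat{\mathcal{U}^{[r+1]}_\chi(\mathfrak{b})}$-structure by \emph{restriction} of an honest action, not by declaring $e_\alpha^{(p^r)}$ to act by $0$; the parenthetical clause in the conjecture (``modules that come under restriction of $\mathcal{U}^{[r+1]}(\mathfrak{g})$'') is pointing at exactly this. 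Your step~(4) tacitly uses that extension, but your step~(2) does not produce it for arbitrary $M$.

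In short: the induction functor $\mathcal{U}^{[r+1]}_\chi(\mathfrak{g})\otimes_{\widehat{\mathcal{U}^{[r+1]}_\chi(\mathfrak{b})}}(-)$ is the right shape, but the input $M^{\mathrm{tw}}$ does not exist in general as you have defined it, because the extension $Dist(G_{(r)})\hookrightarrow\widehat{\mathcal{U}^{[r+1]}_\chi(\mathfrak{b})}\twoheadrightarrow\mathcal{U}_\chi(\mathfrak{b}^{(r)})$ is not split and a $T$-grading on $M$ does not by itself supply the missing lift. This is, as far as the paper indicates, the actual content of the conjecture; the paper's Section~3 twist is its substitute for a direct construction of $F$.
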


To justify this conjecture we will see that in the case for $\chi$ generic, there is a clear relation.
\section{Towards partial Verma functors}

To understand this relation we need to understand the algebras governing these two categories, $\mathcal{U}^{[r]}_0(\mathfrak{g})-mod^{gr}$ and $\mathcal{U}^{[r+1]}_\chi(\mathfrak{g})-mod^{gr}$ for a regular semisimple $\chi$.That is we will consider projective generators $P$ and $P'$ for each category respectively, with one indecomposable projective summand for each simple and we will prove a relation between $End(P)$ and $End(P')$. Namely we prove the following result:
\begin{thm}
	The algebras $End(P)$ has the structure of a $G$ module.There is an isomorphism of vector spaces $End(P)\cong End(P')$. There are elements $c^\chi_\lambda\in\mathcal{U}(\mathfrak{g})\otimes \mathcal{U}(\mathfrak{g}) $, such that the product of $End(P)$ and $End(P')$ for maps between indecomposable projective are related under the above isomorphism by multiplying $End(P)\otimes End(P)$ by $c^\chi_\lambda$ using the structure of $G$-modules and then using the product of $End(P)$. Here the choice of $c^\chi_\lambda$ depends on the grading of the indecomposable projectives used to described the above maps.
\end{thm}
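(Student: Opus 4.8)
The plan is to describe the indecomposable projectives of both categories using the Steinberg tensor product theorem proved above, to make precise the structure of $\mathcal{U}^{[r+1]}_\chi(\mathfrak{g})$ as a non-commuting extension of $Dist(G_{(r)})=\mathcal{U}^{[r]}_0(\mathfrak{g})$ by a copy of $\mathcal{U}_\chi(\mathfrak{g}^{(r)})$, and then to compare the two endomorphism algebras by computing $Hom$-spaces through this extension. First I would set up the projectives. By the theorem above, every graded simple of $\mathcal{U}^{[r+1]}_\chi(\mathfrak{g})-mod^{gr}$ has the form $L_{\mu'}\otimes (L^\chi_\lambda)^{(r)}$ with $L_{\mu'}$ a simple $\mathcal{U}^{[r]}_0(\mathfrak{g})=G_{(r)}$-module and $\lambda\in\Lambda^1_\chi$; since $\chi$ is regular semisimple it is classical that $L^\chi_\lambda=Z^\chi_\lambda$ is the simple baby Verma of $\mathcal{U}_\chi(\mathfrak{g}^{(r)})$, and moreover projective over it. One then checks, exactly as in the proof of the Steinberg theorem, that the projective cover $\widehat{Q}'$ of such a simple restricts along the subalgebra $Dist(G_{(r)})\subset\mathcal{U}^{[r+1]}_\chi(\mathfrak{g})$ to a direct sum of copies of a single indecomposable graded projective $\widehat{Q}$ of $G_{(r)}T-mod$ (cf.\ \cite{Jan}), and restricts along the Frobenius quotient $F^r\colon \mathcal{U}^{[r+1]}_\chi(\mathfrak{g})\twoheadrightarrow \mathcal{U}_\chi(\mathfrak{g}^{(r)})$ to a direct sum of copies of $Z^\chi_\lambda$. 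Tracking $T$-weights, the graded simples of the two categories are indexed by the same set, which fixes an a priori bijection between the indecomposable summands of $P$ and of $P'$; it remains to match $Hom$-spaces and compositions.

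For the $Hom$-spaces I would use the vector-space decomposition $\mathcal{U}^{[r+1]}_\chi(\mathfrak{g})\cong Dist(G_{(r)})\otimes \mathcal{U}_\chi(\mathfrak{g}^{(r)})$, which follows from a PBW argument ordering the low divided powers before the $p^r$-fold ones and matching the dimension $p^{(r+1)\dim\mathfrak{g}}$; crucially the two subalgebras do \emph{not} commute, since the commutator of a $p^r$-divided power with a lower divided power re-enters $Dist(G_{(r)})$, and this is the source of the twist. Thus a $\mathcal{U}^{[r+1]}_\chi(\mathfrak{g})$-module is a vector space with compatible $Dist(G_{(r)})$- and $\mathcal{U}_\chi(\mathfrak{g}^{(r)})$-actions, and
$$Hom_{\mathcal{U}^{[r+1]}_\chi(\mathfrak{g})}(M,N)=Hom_{Dist(G_{(r)})}(M,N)\cap Hom_{\mathcal{U}_\chi(\mathfrak{g}^{(r)})}(M,N).$$
Applied to the projectives above, regular semisimplicity of $\chi$ makes the modules $Z^\chi_\nu$ pairwise non-isomorphic simples over $\mathcal{U}_\chi(\mathfrak{g}^{(r)})$, so the $Hom$-space vanishes unless the two labels $\lambda$ agree, and when they agree the $\mathcal{U}_\chi(\mathfrak{g}^{(r)})$-linear maps are precisely the $k$-linear maps of the multiplicity spaces. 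Intersecting with the $Dist(G_{(r)})$-linearity condition, the extra commutation relations between high and low divided powers translate, via the twisted comultiplication, into a single condition of ``$c^\chi_\lambda$-twisted $G_{(r)}T$-linearity'', where $c^\chi_\lambda\in\mathcal{U}(\mathfrak{g})\otimes\mathcal{U}(\mathfrak{g})$ is the element recording how $Dist(G_{(r)})$ commutes past the one-dimensional $\Lambda^1_\chi$-character underlying $Z^\chi_\lambda$; it is invertible for generic $\chi$, and its dependence on $\lambda$ is the asserted dependence on the gradings of the projectives.

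This yields the theorem. Invertibility of $c^\chi_\lambda$ identifies the $c^\chi_\lambda$-twisted $Hom$-space with the honest $Hom_{G_{(r)}T}(\widehat{Q},\widehat{Q}')$ as a vector space, and summing over the matched index sets gives the vector-space isomorphism $End(P)\cong End(P')$. The $G$-module structure on $End(P)$ is the one induced by the adjoint action of $G$ on $\mathcal{U}^{[r]}_0(\mathfrak{g})=Dist(G_{(r)})$: although $P$ is not $G$-equivariant summand by summand, $End(P)$, being the corresponding graded matrix enlargement of $Dist(G_{(r)})$, inherits a locally finite $G$-action. Finally, tracing how the composition of two $c^\chi_\lambda$-twisted maps corresponds to the composition of the underlying $G_{(r)}T$-maps gives the formula: under the isomorphism, composition on the $End(P')$-side is the composition of $End(P)$ performed after acting on $End(P)\otimes End(P)$ by $c^\chi_\lambda$ through the $G$-module structure on each factor, with $c^\chi_\lambda$ determined by the common label $\lambda$ of the projectives involved.

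I expect the main obstacle to be making the twist explicit: showing that the failure of $Dist(G_{(r)})$ to commute with $\mathcal{U}_\chi(\mathfrak{g}^{(r)})$, after restriction to $Z^\chi_\lambda$, is captured by a single tensor $c^\chi_\lambda\in\mathcal{U}(\mathfrak{g})\otimes\mathcal{U}(\mathfrak{g})$ depending only on $\lambda$ and not on $\widehat{Q}$, proving its invertibility, and checking that composition transforms exactly as claimed. This rests on controlling the commutation of divided powers of different heights in $\mathcal{U}^{[r+1]}(\mathfrak{g})$ restricted to $Z^\chi_\lambda$, which for regular semisimple $\chi$ should reduce to a manageable computation in an exponential/polynomial algebra — precisely the computation performed for $SL_2$ later in the paper, which can be used as a cross-check. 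A secondary point requiring care is verifying that the block decomposition of $\mathcal{U}^{[r+1]}_\chi(\mathfrak{g})$ matches that of $G_{(r)}T$ under the chosen bijection of simples, so that the two index sets for $End(P)$ and $End(P')$ genuinely correspond.
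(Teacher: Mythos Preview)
Your identification of the indecomposable projectives via the Steinberg decomposition matches the paper, but from that point on the two arguments diverge, and your route has a real gap.

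The paper does \emph{not} analyze a PBW splitting $\mathcal{U}^{[r+1]}_\chi(\mathfrak{g})\cong Dist(G_{(r)})\otimes\mathcal{U}_\chi(\mathfrak{g}^{(r)})$ or commutators between divided powers of different heights. Its single structural input is that for $p\gg 0$ the $G_{(r)}$-projectives ${}_rP_\lambda$ extend to $\mathcal{U}^{[r+1]}_0(\mathfrak{g})$-modules (Jantzen). The $G$-module structure on $End(P)$ is exactly the residual $\mathcal{U}(\mathfrak{g}^{(r)})$-action this extension puts on $V^{(\lambda,\lambda')}:=Hom_{\mathcal{U}^{[r]}_0(\mathfrak{g})}({}_rP_\lambda,{}_rP_{\lambda'})$, \emph{not} the adjoint action on $Dist(G_{(r)})$; the latter does not obviously pass to $End(P)$. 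With this in hand, tensor--Hom adjunction collapses everything to baby Vermas:
\[
Hom_{\mathcal{U}^{[r+1]}_\chi(\mathfrak{g})}\bigl((Z^\chi_\mu)^{(r)}\otimes{}_rP_\lambda,\,(Z^\chi_{\mu'})^{(r)}\otimes{}_rP_{\lambda'}\bigr)\ \cong\ Hom_{\mathcal{U}_\chi(\mathfrak{g}^{(r)})}\bigl(Z^\chi_\mu,\,Z^\chi_{\mu'}\otimes V^{(\lambda,\lambda')}\bigr),
\]
while on the $End(P)$ side the graded Hom is simply the weight space $V^{(\lambda,\lambda')}_{\mu-\mu'}$. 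The element $c^\chi_{\mu'}$ is then \emph{defined} concretely: it is the element of $\mathcal{U}_0(\mathfrak{n})\otimes\mathcal{U}_0(\mathfrak{n}_-)$ corresponding to the identity under $(Z^\chi_{\mu'})^*\otimes Z^\chi_{\mu'}\cong\mathcal{U}_0(\mathfrak{n})\otimes\mathcal{U}_0(\mathfrak{n}_-)$. Acting by it sends $v\otimes 1^\chi_{\mu'}$ to a highest-weight vector in $V\otimes Z^\chi_{\mu'}$, giving the isomorphism $V_{\mu-\mu'}\xrightarrow{\sim}Hom(Z^\chi_\mu,Z^\chi_{\mu'}\otimes V)$; chasing two such steps shows composition in $End(P')$ is $v\otimes w\mapsto c^\chi_{\mu'}(v\otimes w)$ followed by the $End(P)$-product. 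So $c^\chi_\mu$ is a highest-weight-vector machine, not a record of divided-power commutation.

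The gap in your approach is the intersection formula $Hom_{\mathcal{U}^{[r+1]}_\chi}=Hom_{Dist(G_{(r)})}\cap Hom_{\mathcal{U}_\chi(\mathfrak{g}^{(r)})}$. For this you need both pieces to be subalgebras jointly generating the whole, but $\mathcal{U}_\chi(\mathfrak{g}^{(r)})$ is a \emph{quotient} of $\mathcal{U}^{[r+1]}_\chi(\mathfrak{g})$ via $F^r$, not a subalgebra, and the $p^r$-divided powers do not span one. More seriously, your vanishing claim ``the Hom-space vanishes unless the two labels $\lambda$ agree'' is false: as a module for (any reasonable lift of) $\mathcal{U}_\chi(\mathfrak{g}^{(r)})$, the projective $(Z^\chi_\mu)^{(r)}\otimes{}_rP_\lambda$ is \emph{not} $Z^\chi_\mu$-isotypic, because ${}_rP_\lambda$ itself carries a nontrivial $\mathfrak{g}^{(r)}$-action coming from its extension, and the diagonal action shifts $\mu$. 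This is exactly why the adjunction above lands in $Hom(Z^\chi_\mu,Z^\chi_{\mu'}\otimes V^{(\lambda,\lambda')})$, which is typically nonzero for $\mu\neq\mu'$ and has dimension $\dim V^{(\lambda,\lambda')}_{\mu-\mu'}$. Without the extension of the ${}_rP_\lambda$ you lack both the correct $G$-module structure on $End(P)$ and a mechanism singling out a tensor $c^\chi_\mu$ independent of $\lambda,\lambda'$; the $SL_2$ formula $c^\chi_n=\sum_k\frac{(-1)^k}{k!\,d^\chi_n\cdots(d^\chi_n-k+1)}\,e^k\otimes f^k$ that you allude to is derived in the paper from the identity-matrix description, not from commutators of divided powers.
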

The rest of this section will be a proof of the above theorem, as well as explaining precisely what the $c_\lambda$ is.\\
To do this we note that the construction in the previous section works for any module, that is given by restriction from a $\mathcal{U}^{[r+1]}(\mathfrak{g})$ module. By \cite{Jan1} for p large enough the projective cover $_rP_\lambda$ of $L_\lambda$ as $\mathcal{U}^{[r]}_0(\mathfrak{g})$ modules for $\lambda\in\Lambda^r$ extends to a module of $\mathcal{U}^{[r+1]}_0(\mathfrak{g})$ in a canonical way. Further denote by $_rP^\chi_\lambda$ the projective cover of $L^\chi_\lambda$ as $\mathcal{U}^{[r]}_\chi(\mathfrak{g})$ modules.
	\begin{lem}
		For $\lambda_0\in\Lambda^r$ and $\lambda_1\in\Lambda$ and $\chi$ a semisimple character we have an isomorphism of graded $\mathcal{U}^{[r+1]}_\chi(\mathfrak{g})$ modules
		$$ _{r+1}P^\chi_{\lambda_0+p^r\lambda_1}=\textbf{} _rP_{\lambda_0}\otimes (\textbf{}_1P^\chi_{\lambda_1})^{(r)}$$
	\end{lem}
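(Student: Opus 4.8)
The plan is to identify both sides as projective covers of the same simple module in the category of graded $\mathcal{U}^{[r+1]}_\chi(\mathfrak{g})$-modules, and then invoke uniqueness of projective covers. First I would check that the right-hand side is the correct simple module in its head: by the Steinberg tensor product theorem proved above (extended to the $\chi$-twisted setting, i.e.\ the second theorem in the excerpt), the head of $\textbf{}_rP_{\lambda_0}\otimes (\textbf{}_1P^\chi_{\lambda_1})^{(r)}$ is $L_{\lambda_0}\otimes (L^\chi_{\lambda_1})^{(r)} = L^\chi_{\lambda_0+p^r\lambda_1}$, using that $\lambda_0\in\Lambda^r$ extends uniquely to the canonical split character and that $\lambda_0 + p^r\lambda_1$ records exactly this data in $\Lambda^{r+1}_\chi$. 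So the two modules have isomorphic heads, and the grading induced by $T$ matches on both sides since the Frobenius twist and the canonical $\mathcal{U}^{[r+1]}_0$-structure on $\textbf{}_rP_{\lambda_0}$ (via \cite{Jan1}) are $T$-equivariant.

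Next I would verify projectivity of the right-hand side. The key input is that $\textbf{}_rP_{\lambda_0}$, considered via the canonical extension of \cite{Jan1} as a $\mathcal{U}^{[r+1]}_0(\mathfrak{g})$-module (hence restricting along $\mathcal{U}^{[r+1]}_0(\mathfrak{g})\hookrightarrow \mathcal{U}^{[r+1]}(\mathfrak{g})\to\mathcal{U}^{[r+1]}_\chi(\mathfrak{g})$ appropriately), behaves like a projective for the ``$\mathfrak{g}^{(r)}$-part'', while $(\textbf{}_1P^\chi_{\lambda_1})^{(r)}$ is projective for the ``Frobenius-twisted $\mathcal{U}^{[1]}_\chi$-part''. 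Concretely, I would use that $\mathcal{U}^{[r+1]}_\chi(\mathfrak{g})$ is, after the relevant central reduction, free (or at least faithfully flat) over the tensor factor $\mathcal{U}_0^{[r]}(\mathfrak{g})$ arising from $\textbf{}_rP_{\lambda_0}$, so that tensoring a projective $\mathcal{U}^{[r]}_0(\mathfrak{g})$-module with the twisted projective stays projective; this is the standard mechanism by which Steinberg-type factorizations lift to projectives, and it is exactly parallel to the classical statement $Q_\lambda \cong \hat Q_{\lambda_0}\otimes L_{\lambda_1}^{(1)}$ for Frobenius kernels in \cite{Jan}. Once projectivity and the correct head are established, $\textbf{}_rP_{\lambda_0}\otimes (\textbf{}_1P^\chi_{\lambda_1})^{(r)}$ is a projective module surjecting onto $L^\chi_{\lambda_0+p^r\lambda_1}$, and a dimension/minimality count (comparing composition factor multiplicities via the tensor identity $\mathrm{Hom}(M\otimes N^{(r)}, -) \cong \mathrm{Hom}(M, \mathrm{Hom}(N^{(r)},-))$ and the adjunction used in the proof of the earlier theorem) shows it has no proper projective summand, so it is the projective cover $\textbf{}_{r+1}P^\chi_{\lambda_0+p^r\lambda_1}$.

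The main obstacle I expect is controlling the interaction of the central reduction at $\chi$ with the canonical \cite{Jan1}-extension of $\textbf{}_rP_{\lambda_0}$: one must check that restricting the $\mathcal{U}^{[r+1]}_\chi(\mathfrak{g})$-module structure does not destroy projectivity when $\chi\neq 0$, i.e.\ that the ``$p$-center'' deformation only affects the Frobenius-twisted factor and acts through the canonical extension in a way compatible with the tensor decomposition. This requires either a direct computation that the relevant block of $\mathcal{U}^{[r+1]}_\chi(\mathfrak{g})$ is a matrix algebra over $\mathcal{U}^{[1]}_\chi(\mathfrak{g})$ tensored with $\mathrm{End}$ of $\textbf{}_rP_{\lambda_0}$, or an appeal to the large-$p$ hypothesis under which the blocks of $\mathcal{U}^{[r+1]}_0(\mathfrak{g})$ are Morita-equivalent across the deformation. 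Everything else is bookkeeping with weights in $\Lambda^\bullet_\chi$ and routine application of the already-proved Steinberg theorem.
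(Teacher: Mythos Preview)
Your proposal is correct and is precisely the approach the paper intends: the paper's proof consists of a single sentence, ``This follows by generalizing an argument in \cite{Jan}, just as the Steinberg tensor product theorem generalizes to general $p$-character,'' and the Jantzen argument being invoked is exactly the one you outline---compute the head via Steinberg, verify projectivity of the tensor product, and conclude by uniqueness of projective covers. Your write-up is in fact considerably more detailed than the paper's, and the obstacle you flag (compatibility of the \cite{Jan1} extension with the central reduction at $\chi$) is the only nontrivial point in carrying out the generalization; the paper simply absorbs this into the phrase ``just as the Steinberg tensor product theorem generalizes.''
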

	\begin{proof}
		This follows by generalizing an argument in \cite{Jan}, just as the Steinberg tensor product theorem generalizes to general p-character.
	\end{proof} 
	From this we get that in the case of $\chi$ generic the indecomposable projectives of $\mathcal{U}^{[r+1]}_\chi(\mathfrak{g})$ are given by $(Z^\chi_{\lambda_1})^{(r)}\otimes\textbf{} _rP_{\lambda_0}$\\
	As $_rP_{\lambda}$ extends to a $\mathcal{U}^{[r+1]}_0(\mathfrak{g})$ module, we have that $Hom_{\mathcal{U}^{[r]}_0(\mathfrak{g})}(_rP_{\lambda},{}_rP_{\lambda'})$ can be given the structure of a $\mathcal{U}(\mathfrak{g})$ module. Lets call this module $V^{(\lambda,\lambda')}$ then composition is given by a $\mathcal{U}(\mathfrak{g})$ module map $V^{(\lambda,\lambda')}\otimes V^{(\lambda',\lambda'')}\rightarrow V^{(\lambda,\lambda'')}$. We have then the following isomorphisms:
	$$Hom_{\mathcal{U}^{[r+1]}_\chi(\mathfrak{g})}((Z^\chi_\mu)^{(r)}\otimes{}_rP_\lambda,(Z^\chi_{\mu'})^{(r)}\otimes{}_rP_{\lambda'})=Hom_{\mathcal{U}_\chi(\mathfrak{g^{(r-1)}})}(Z^\chi_\mu,Z^\chi_{\mu'}\otimes V^{(\lambda,\lambda')})$$
	Now under the above isomorphism, we get the composition is given as follows:
	\begin{align*}
		Hom_{\mathcal{U}_\chi(\mathfrak{g^{(r-1)}})}(Z^\chi_\mu,Z^\chi_{\mu'}\otimes V^{(\lambda,\lambda')})\otimes Hom_{\mathcal{U}_\chi(\mathfrak{g^{(r-1)}})}(Z^\chi_{\mu'},Z^\chi_{\mu''}\otimes V^{(\lambda',\lambda'')})\\\rightarrow Hom_{\mathcal{U}_\chi(\mathfrak{g^{(r-1)}})}(Z^\chi_\mu,Z^\chi_{\mu''}\otimes V^{(\lambda,\lambda')}\otimes V^{(\lambda',\lambda'')})\\\rightarrow Hom_{\mathcal{U}_\chi(\mathfrak{g^{(r-1)}})}(Z^\chi_{\mu},Z^\chi_{\mu''}\otimes V^{(\lambda',\lambda'')})\\
	\end{align*}
	Where the first map above is given by $f\otimes g\mapsto (g\otimes id)\circ f$ and the second map comes from the above given map $V\otimes W\rightarrow U$. Now the indecomposable projectives in $\mathcal{U}^{[r]}_0(\mathfrak{g})-mod^{gr}$ are given by $k_{p^r\mu}^\chi\otimes {}_rP_\lambda$ for $\lambda\in\Lambda^r$ and $\mu\in\Lambda$ and the indecomposable projectives of $\mathcal{U}^{[r+1]}_\chi(\mathfrak{g})-mod^{gr}$ are $(Z^\chi_\mu)^{(r)}\otimes{}_rP_\lambda$ for $\mu$ and $\lambda$ as above. Then the homomorphism in the graded categories are given by 
	$$Hom_{\mathcal{U}^{[r]}_0(\mathfrak{g})-mod^{gr}}(k_{p^r\mu}^\chi\otimes {}_rP_\lambda,k_{p^r\mu'}^\chi\otimes {}_rP_{\lambda'})\cong V^{(\lambda,\lambda')}_{\mu-\mu'}$$
	and 
	\begin{align*}
		Hom_{\mathcal{U}^{[r+1]}_\chi(\mathfrak{g})-mod^{gr}}((Z^\chi_\mu)^{(r)}\otimes{}_rP_\lambda,(Z^\chi_{\mu'})^{(r)}\otimes{}_rP_{\lambda'})\\\cong Hom_{\mathcal{U}_\chi(\mathfrak{g^{(r-1)}})-mod^{gr}}(Z^\chi_\mu,Z^\chi_{\mu'}\otimes V^{(\lambda,\lambda')})
	\end{align*}
	Here for a $\mathcal{U}_\chi(\mathfrak{g})$ module $V$, $V_\lambda$ is the $\lambda$ weight space of $V$. 
	Here the composition in the second category is given as explained above. And the first composition is given by $V^{(\lambda,\lambda')}_{\mu-\mu'}\otimes V^{(\lambda',\lambda'')}_{\mu'-\mu''}\rightarrow (V^{(\lambda,\lambda')}\otimes V^{(\lambda',\lambda'')})_{\mu-\mu''}\rightarrow V^{(\lambda,\lambda'')}_{\mu-\mu''}$. Here the first map is just that obvious inclusion and the second is given by the above composition product of the $\mathcal{U}(\mathfrak{g})$ modules $V^{(\lambda',\lambda'')}$\\
	We will now give an isomorphism $$Hom_{\mathcal{U}_\chi(\mathfrak{g^{(r-1)}})-mod^{gr}}(Z^\chi_\mu,Z^\chi_{\mu'}\otimes V^{(\lambda,\lambda')})\cong V^{(\lambda,\lambda')}_{\mu-\mu'}$$
	Such that the two products are related by a certain twist by an action of an element in  $\mathcal{U}(\mathfrak{g})\otimes \mathcal{U}(\mathfrak{g})$.
	Now we will try to find an isomorphism between these two vector spaces respecting composition. To do this we will use $Z^\chi_\mu\cong Ind_\mathfrak{b}^\mathfrak{g}(k^\chi_\mu)$ and so $Z^\chi_\mu\otimes V\cong Ind_\mathfrak{b}^\mathfrak{g}(k^\chi_\mu\otimes Res_\mathfrak{b}^\mathfrak{g}(V))$. Consider $(Z^\chi_\mu)^*\otimes Z^\chi_\mu\cong \mathcal{U}_0(\mathfrak{n})\otimes\mathcal{U}_0(\mathfrak{n}_-)$ this is given as baby Verma for generic characters are both isomorphic to $\mathcal{U}_0(\mathfrak{n})$ and $\mathcal{U}_0(\mathfrak{n}_-)$ given by choosing a lowest weight or highest weight vector. These can be chosen compatibly so that the pairing gives 1. Consider the element $x\in (Z^\chi_\mu)^*\otimes Z^\chi_\mu$ corresponding to the identity matrix. Let $c^\chi_\mu\in \mathcal{U}_0(\mathfrak{n})\otimes\mathcal{U}_0(\mathfrak{n}_-)$ be the corresponding element under the above isomorphism. Note that as a $\mathcal{U}_0(\mathfrak{n})$-module, we have an isomorphism $(Z^\chi_\mu)^*\otimes Z^\chi_\mu\cong \mathcal{U}_0(\mathfrak{n})\otimes Z^\chi_\mu$ using the above isomorphism. Now for $v\in V$ we get a $\mathcal{U}_0(\mathfrak{n})$ submodule generated by $v\otimes  Z^\chi_\mu\subset V\otimes  Z^\chi_\mu$, which is a quotient of $\mathcal{U}_0(\mathfrak{n})\otimes Z^\chi_\mu$ and thus $c^\chi_\mu(v\otimes 1^\chi_\mu)$ is an invariant $\mathcal{U}_0(\mathfrak{n})$-submodule of $V\otimes  Z^\chi_\mu$, ie a highest weight vector. We thus get a linear map
	$$V^{(\lambda,\lambda')}_{\mu-\mu'}\rightarrow Hom_{\mathcal{U}_\chi(\mathfrak{g^{(r-1)}})-mod^{gr}}(Z^\chi_\mu,Z^\chi_{\mu'}\otimes V^{(\lambda,\lambda')}) $$
	 defined by $v\mapsto(1^\chi_\mu\mapsto c^\chi_{\mu'}(v\otimes 1^\chi_{\mu'}))$.We check this is injective. Note that we have a linear map $V\otimes Z^\chi_\mu\rightarrow V\otimes k\cong V$ given by quotenting the spaces of weight smaller than $\mu$. Using the isomorphism $V\otimes k\cong V$ given by $v\otimes 1^\chi_\mu\mapsto v$ we get that the above highest weight vectors $c^\chi_\mu(v\otimes 1^\chi_\mu)$ goes to $v$ under the above map. It follows that the space of highest weight vectors described by the above map has a left inverse so is injective.\\
	Further $V\otimes Z^\chi_\mu$ has a filtration with associated grades $\oplus V_{\mu-\mu'}\otimes Z^\chi_{\mu'}$ and because $\chi$ is generic this category is semisimple, thus $V\otimes Z^\chi_\mu\cong \oplus V_{\mu-\mu'}\otimes Z^\chi_{\mu'}$ and further it follows that the dimension of highest weight vectors of weight $\mu'$ are exactly $dim(V^{(\lambda,\lambda')}_{\mu-\mu'})$ and so the above map is an isomorphism.\\
	Now using this isomorphism we will see what the composition law gives. From the above computation we get the composition of the maps given by $v\in V_{\mu-\mu'}$ and $w\in W_{\mu'-\mu''}$ is described by the highest weight vector $c^\chi_{\mu'}(v\otimes c^\chi_\mu(w\otimes 1^\chi_\mu))$, where $c^\chi_{\mu'}$ acts via $id\otimes \Delta$, where $\Delta$ is the coproduct. We saw above that projecting to the highest weight space of $Z^\chi_{\mu''}$ gives an inverse to the above isomorphism, and thus we get the above highest weight vector comes from the vector $c^\chi_{\mu'}(v\otimes w)$.\\
	Thus we get an isomorphism of the underlying vector spaces of both algebras, whose products differ by the twist given by multiplication with $c^\chi_\mu$ for an appropriate $\mu$.	
\section{Computations for $Sl_2$}
In this section we compute the algebra governing the $SL_2$ representations for a graded Frobenius kernel at a $0$ parameter, up to some constants, to see the above relation gives actually two isomorphic algebras, so the above relation becomes an equivalence of categories.\\
We then further compute the center of the graded Frobenius kernel for $SL_2$
\subsection{Presentation of the algebra}
In this first part we find a presentation, up to some constants, which we do not compute.\\
Note that the projective module $P_i:=_1P_i$ for the first Frobenius kernel considered as a $G$ module has subquotients given by $L_i$, $L_1^{(1)}\otimes L_{p-2-i}$ and $L_i$, both as the socle and cosocle filtration, when $0\leq i\leq p-2$ and $P_{p-1}=L_{p-1}$.\\
Using this we find some morphisms, which we will prove generate and we will find the relations.\\
We will break up maps into partial maps acting separately on each tensor factor. To do this we first introduce the single level map, ie we introduce some maps of $G$-modules involving $P_i$. To do this we introduce the notation $V=L_1$ as we will use this repeatedly.\\
Note that $P_i\otimes V\cong P_{i-1}\oplus P_{i+1}$ for $1\leq i\leq p-3$, $P_0\otimes V\cong V^{(1)}\otimes P_{p-1}\oplus P_1$, $P_{p-2}\otimes V\cong P_{p-3}\oplus P_{p-1}^{\oplus 2}$ and $P_{p-1}\otimes V\cong P_{p-2}$.\\
We fix splittings $P_r\rightarrow P_{r\pm1}\otimes V\rightarrow P_r$ for $0\leq r, r\pm 1\leq p-2$ and also a splitting $V^{(1)}\otimes P_{p-1}\rightarrow P_0\otimes V$, which exists by the above. Further fix an isomoprhism $P_{p-1}\otimes V\cong P_{p-2}$. Using this we can find by adjunction of the above maps, splittings $P_{r+1}\rightarrow P_r\otimes V\rightarrow P_{r+1}$.\\
We also fix maps $V^{(1)}\otimes P_r\rightarrow P_{p-2-r}$ for $0\leq r\leq p-2$. These can be seen to exist by an easy computation in the second frobenius kernel of $SL_2$.\\
Now we will write some commuting diagrams for the above maps, where we write a commuting diagram to mean the maps agree up to a non-zero constant multiple of each other.\\
For $0\leq r, r\pm 1\leq p-2$ these map then satisfy the following relations:
\[\begin{tikzcd}
V^{(1)}\otimes P_r \arrow[r] \arrow[d]
& P_{p-2-r} \arrow[d] \\
V^{(1)}\otimes P_{r\pm 1}\otimes V \arrow[r]
& P_{p-2-(r\pm 1)}\otimes V
\end{tikzcd}\] 
These are satisfied as, by considering the block decomposition, these two have to have the same image and the endomorphisms of the image are given by just constants.\\
Also we have the following commutative diagram defining $\Omega:P_r\rightarrow P_r$ for $0\leq r\leq p-2$:
\[\begin{tikzcd}
V^{(1)}\otimes V^{(1)}\otimes P_r \arrow[r] \arrow[d]
& P_r \arrow[d,"\Omega"] \\
V^{(1)}\otimes P_{p-2-r} \arrow[r]
& P_r
\end{tikzcd}\] 
Here the top map is the duality pairing $V\otimes V\rightarrow L_0$. The map $\Omega$ can also be factored as $P_i\rightarrow L_i\rightarrow P_i$ corresponding to these being both injective hulls and projective covers of $L_i$ in the first Frobenius kernels. Note that this and $Id$ span all endomorphisms of $P_i$.\\
We also define two maps $\phi_{min/max}:P_{p-1}\rightarrow P_{p-2}\otimes V$, by the commuting diagram:
\[\begin{tikzcd}
P_{p-2} \arrow[r] \arrow[d, "\Omega",shift left=1.5ex]\arrow[d,"Id",shift right=1.5ex ]
& P_{p-1}\otimes V \arrow[d,"\phi_{min}"] \arrow[d,"\phi_{max}",shift right=5.5ex] \\
P_{p-2} \arrow[r,leftarrow]
& P_{p-2}\otimes V\otimes V
\end{tikzcd}\] 
and by the condition:
\[\begin{tikzcd}
P_{p-1} \arrow[r] \arrow[d]
& P_{p-2}\otimes V \arrow[dl] \\
P_{p-1}\otimes V 
\end{tikzcd}\] 
We can do this as the two maps $P_{p-1}\rightarrow P_{p-2}\otimes V$ come from the socle and head $L_{p-2}$ in $P_{p-2}$ and from the description of $\Omega$ above. Further as $\phi_{min}$ corresponds to the socle of $P_{p-2}$ we get the commutativity:
\[\begin{tikzcd}
V^{(1)}\otimes V^{(1)}\otimes P_{p-1} \arrow[r] \arrow[d]
& P_{p-1} \arrow[d,"\phi_{min}"] \\
V^{(1)}\otimes P_0\otimes V \arrow[r]
& P_{p-2}\otimes V
\end{tikzcd}\] 
Further  by a similar argument we get:
\[\begin{tikzcd}
V^{(1)}\otimes P_{p-1} \arrow[dr] \arrow[d, "\phi_{max}"]\\
V^{(1)}\otimes P_{p-2}\otimes V \arrow[r]
& P_0\otimes V
\end{tikzcd}\]
And lastly we get the commutative diagram given by 
\[\begin{tikzcd}
P_{p-1} \arrow[r,"\phi_{min}", shift right=1.5ex]\arrow[r,shift left=1.5ex, "\phi_{max}"] \arrow[d, "Id"]
& P_{p-2}\otimes V \arrow[d] \\
P_{p-1} \arrow[r,leftarrow]
& P_{p-1}\otimes V\otimes V
\end{tikzcd}\] 
\\
Using these we define the morphisms of projectives in Frobenius kernels as follows:
\[\begin{tikzcd}
\dots P_{k_l\pm 1}^{(l)}\otimes P_{k_{l-1}}^{(l-1)}\dots \arrow[r] \arrow[d]
& \dots P_{k_l}^{(l)}\otimes P_{p-2-k_{l-1}}^{(l-1)}\dots \\
\dots P_{k_l}^{(l)}\otimes V^{(l)}\otimes P_{k_{l-1}}^{(l-1)}\dots \arrow[ur]
\end{tikzcd}\]  
Here we have $0\leq k_{l-1}\leq p-2$ and $0\leq k_l\pm 1\leq p-2$ and $0\leq k_l\leq p-1$. The action on the rest of the factors is given by the identity. We say this morphism has level $l-1$. We will define several more morphisms, but we will not write the dots, and we will assume the action on the missing factors is given by the identity.\\
We continue defining maps as follows:
\[\begin{tikzcd}
P_{p-1}^{(l)}\otimes P_{k_{l-1}}^{(l-1)} \arrow[r] \arrow[r, shift right=1.5ex] \arrow[d] \arrow[d, shift right=1.5ex]
& P_{p-2}^{(l)}\otimes P_{p-2-k_{l-1}}^{(l-1)} \\
P_{p-2}^{(l)}\otimes V^{(l)}\otimes P_{k_{l-1}}^{(l-1)} \arrow[ur]
\end{tikzcd}\] 
Again we say this map has level $l-1$ and the two maps are the maps $\phi_{min/max}$.\\
We check these maps generate and further that every map is a linear combination of monomials in these maps, where the product is taken as a product of maps of non-decreasing level.\\
To check generation we proceed by induction. From the filtration considered above for $P_i$, we get a filtration on $P_{k_r}^{(r)}\otimes\dots P_{k_0}$ with associated graded given by $P_{k_r}^{(r)}\dots P_{k_2}^{(2)}\otimes ((P_{k_1}^{(1)})^{\oplus 2}\otimes L_{k_0}\oplus (P_{k_1+1}\oplus P_{k_1-1})\otimes L_{p-k_0-2})$. Here we assume $k_0\neq p-1$ and $1\leq k_1\leq p-3$. We have similar decompositions for other $k_1$ given by the decompositions of $P_{k_1}\otimes V$. Note that if $k_1=0$, we get a summand $V^{(2)}\otimes P_{p-1}^{(1)}$ of $P_0^{(1)}\otimes V^{(1)}$. This extra $V^{(2)}$ give a decomposition of $P_{k_2}^{(2)}\otimes V^{(2)}$ and we can continue inductively if $k_2=0$. Since these are projectives modules we just need to find linearly independent corresponding to the irreducible subquotients. We have maps given by $z\otimes id, z\otimes \Omega: P_{k'_l}^{(l)}\dots P_{k'_1}^{(1)}\otimes P_{k_0}\rightarrow  P_{k_l}^{(l)}\dots P_{k_1}^{(1)}\otimes P_{k_0}$ corresponding to the two summands ending in $L_{k_0}$ in the above associated graded. Further composing a map $z\otimes 1$ with the map $ P_{k_l}^{(l)}\dots P_{k_1\pm 1}^{(1)}\otimes P_{p-2-k_0}\rightarrow  P_{k_l}^{(l)}\dots P_{k_1}^{(1)}\otimes P_{k_0}$ described above, we get the subquotients corresponding to the summands of the associated graded ending in $L_{p-2-k_0}$. The same argument works for all the other decompositions of $P_i\otimes V$ as we have similarly defined maps, except for the case $P_0\otimes V$.\\
For the remaining case we need to introduce the following map:
\[\xymatrix{
P_{k_l\pm 1}^{(l)}\otimes P_{p-1}^{(l-1)}\dots P_{k_0} \ar[r] \ar[d]
& P_{k_l}^{(l)}\otimes P_0^{(l-1)}\dots P_{p-2-k_0}\\
P_{k_l}^{(l)}\otimes V^{(l)}\otimes P_{p-1}^{(l-1)}\dots P_{k_0} \ar[d]&\\
P_{k_l}^{(l)}\otimes P_0^{(l-1)}\otimes V^{(l-1)}\dots P_{k_0} \ar[d]&\\
\vdots \ar[d]&\\
P_{k_l}^{(l)}\otimes P_0^{(l-1)}\dots V^{(1)}\otimes P_{k_0}\ar[uuuur]    \\
}\] 
Note that we can factor the map $V^{(1)}\otimes P_{p-1}\rightarrow P_0\otimes V$ through $V^{(1)}\otimes P_{p-2}\otimes V$ as noted above. And hence we can factor this missing map through a sequence of maps already defined. The only missing case is if $k_0=p-1$ in this case note that taking Frobenius twist and tensoring by the Steinberg is an equivalence of categories onto the block over the Steinberg. It follows that the maps over the Steinberg are generated by the above maps by induction on the Frobenius kernels. Thus we get by the above argument and induction that the above described maps generate the endomorphism algebra governing the category.\\
Now we want to commute two of the above maps if they are of different level. Note that if the level differs by two, the maps act on different factors so they clearly commute with each other. So it remains to check a commutativity condition for maps differening in level by one. To do this we consider the following diagram:
\[
	\small\begin{tikzcd}[column sep= tiny]
P_{s\pm 1}^{(2)}\otimes P_{r\pm 1}^{(1)}\otimes P_k \arrow[r] \arrow[d]
& P_{s\pm 1}^{(2)}\otimes P_{r}^{(1)}\otimes V^{(1)}\otimes P_k \arrow[r] \arrow[d]
& P_{s\pm 1}^{(2)}\otimes P_r^{(1)}\otimes P_{p-2-k} \arrow[d]\\
P_s^{(2)}\otimes V^{(2)}\otimes P_{r\pm 1}^{(1)}\otimes P_k \arrow[r] \arrow[d]
& P_s^{(2)}\otimes V^{(2)}\otimes P_r^{(1)}\otimes V^{(1)}\otimes P_k \arrow[r] \arrow[d]
& P_s^{(2)}\otimes V^{(2)}\otimes P_r^{(1)}\otimes P_{p-2-k} \arrow[d]\\
P_s^{(2)}\otimes P_{p-2-(r\pm 1)}^{(1)}\otimes P_k \arrow[r] 
& P_s^{(2)}\otimes P_{p-2-r}^{(1)}\otimes V^{(1)}\otimes P_k \arrow[r]
& P_s^{(2)}\otimes P_{p-2-r}^{(1)}\otimes P_{p-2-k} \\
\end{tikzcd}\] 
Here every square is obviously commutative except the bottom left which commutes up to non-zero factor by the statements made above. We also have the following commutativity.
\[\small\begin{tikzcd}[column sep=tiny]
P_{s\pm 1}^{(2)}\otimes P_{p-1}^{(1)}\otimes P_k \arrow[r] \arrow[d]
& P_{s\pm 1}^{(2)}\otimes P_{p-2}^{(1)}\otimes V^{(1)}\otimes P_k \arrow[r] \arrow[d]
& P_{s\pm 1}^{(2)}\otimes P_{p-2}^{(1)}\otimes P_{p-2-k} \arrow[d]\\
P_s^{(2)}\otimes V^{(2)}\otimes P_{p-1}^{(1)}\otimes P_k \arrow[r] \arrow[d]
& P_s^{(2)}\otimes V^{(2)}\otimes P_{p-2}^{(1)}\otimes V^{(1)}\otimes P_k \arrow[r] \arrow[dl]
& P_s^{(2)}\otimes V^{(2)}\otimes P_{p-2}^{(1)}\otimes P_{p-2-k} \arrow[d]\\
P_s^{(2)}\otimes P_0^{(1)}\otimes P_k) \arrow[rr] 
&
& P_s^{(2)}\otimes P_0^{(1)}\otimes P_{p-2-k} \\
\end{tikzcd}\] 
Here again we use only need to check the bottom left triangle and this is one of the triangles given above. \\
From this we can move maps past each other if they have decreasing level. The remaining relations are relations between maps of same level.\\
We have the following commutative diagram
\[\begin{tikzcd}
P_{r\pm 2/0}^{(1)}\otimes P_{k} \arrow[r] \arrow[ddr, "\theta", bend right]
& P_{r\pm 1}^{(1)}\otimes V^{(1)}\otimes P_{k}^{(1)}\arrow[r] \arrow[d]
& P_{r\pm 1}^{(1)}\otimes P_{p-2-k} \arrow[d]\\
& P_r^{(1)}\otimes V^{(1)}\otimes V^{(1)}\otimes P_k \arrow[r] \arrow[d]
& P_r^{(1)}\otimes V^{(1)}\otimes P_{p-2-k} \arrow[d]\\
&P_r^{(1)}\otimes P_k \arrow[r,"\Omega"] 
& P_r^{(1)}\otimes P_k \\
\end{tikzcd}\]
Here the square on the right comes form the commutative diagram
\[\begin{tikzcd}
P_{r\pm 2/0} \arrow[r] \arrow[d,"\theta"]
& P_{r\pm 1}\otimes V \arrow[d]  \\
P_r \arrow[r,leftarrow]
& P_r\otimes V\otimes V
\end{tikzcd}\] 
This commutes in the case $s\pm 2$ with $\theta$ being the zero map, just looking at the blocks. In the remaining case we could have chossen the splitting in such a way that the two maps were adjoint under the self-adjunction of the functor of tensoring with $V$. In that case, the square exactly presents that description. Any other choice of splitting differs hence by an automorphism of $P_s$, thus the above indeed commutes with some automorphism $\theta$ that depends on the above fixed choice of splitting. Note that we can not choose both the splittings to be simultaneously adjoint to each other. This can be seen by noting that the composition of adjoints is not the identity. This is exactly what makes the relations slightly more complicated and why we have chosen to describe the relations up to non-zero constants, or in this case up to an isomorophism, which still allows us to prove our goals, but reduces the computations. The constants and isomorphisms above can be computed, but this requires some algebra we prefer to skip.\\
The above commuting diagram works in the case of the zero map always, as long as all $0\leq s\pm 2,s\leq p-1$ and in the other case when both $s$ and $s\pm 1$ are $\neq p-1$. The only remaining cases are the ones involving only $p-1$ and $p-2$.\\
The first of these commuting diagrams is given by 
\[\begin{tikzcd}
P_{p-1}^{(1)}\otimes P_{k} \arrow[r,shift right=1.5ex,"\phi_{min}"] \arrow[r, shift left=1.5ex,"\phi_{max}"] \arrow[ddr, "Id",bend right]
& P_{p-2}^{(1)}\otimes V^{(1)}\otimes P_{k}^{(1)}\arrow[r] \arrow[d]
& P_{p-2}^{(1)}\otimes P_{p-2-k} \arrow[d]\\
& P_{p-1}^{(1)}\otimes V^{(1)}\otimes V^{(1)}\otimes P_k \arrow[r] \arrow[d]
& P_{p-1}^{(1)}\otimes V^{(1)}\otimes P_{p-2-k} \arrow[d]\\
&P_{p-1}^{(1)}\otimes P_k \arrow[r,"\Omega"] 
& P_{p-1}^{(1)}\otimes P_k \\
\end{tikzcd}\]
and the last commuting diagram is given by 
\[\begin{tikzcd}
P_{p-2}^{(1)}\otimes P_{k} \arrow[r] \arrow[ddr, "Id", bend right] \arrow[ddr, "\Omega", shift right=3ex, bend right]
& P_{p-1}^{(1)}\otimes V^{(1)}\otimes P_{k}^{(1)}\arrow[r] \arrow[d, "\phi_{min}",shift right=3ex]\arrow[d,shift left=3ex, "\phi_{max}"]
& P_{p-1}^{(1)}\otimes P_{p-2-k} \arrow[d]\\
& P_{p-2}^{(1)}\otimes V^{(1)}\otimes V^{(1)}\otimes P_k \arrow[r] \arrow[d]
& P_{p-2}^{(1)}\otimes V^{(1)}\otimes P_{p-2-k} \arrow[d]\\
&P_{p-2}^{(1)}\otimes P_k \arrow[r,"\Omega"] 
& P_{p-2}^{(1)}\otimes P_k \\
\end{tikzcd}\]
Again here all squares clearly commute, except the square on the left, which commutes by the definition of $\phi_{min/max}$ and the properites of these stated above.\\
There are also relations of the maps of level $r$, but those can be argued similarly to the above, but easier, as there is only an action on one tensor factor instead of two.\\
This in fact give all the conditions up to constants. This follows as the above conditions allow us to first describe every element as a linear combination of maps composed in increasing order. Further the elements of one fixed level generate an algebra, which is spanned by the above generators and also by the $\Omega$'s and the idempotents of each projective factor of level l. This follows by the description of the relations in the first Frobenius kernels. As we can describe the projective modules and the simple modules as tensor products, we get that this in fact give exactly as many linearly independent elements as simple subquotients of a fixed projective, hence give precisely a basis.\\
The above description of the algebra can be upgraded with further work to an exact description by generators and relations. To do this we have to choose the above splittings $P_r\rightarrow P_{r+1}\otimes V\rightarrow P_r$ and then considering the adjoint maps of this compute the composition $P_{r+1}\rightarrow P_r\otimes V\rightarrow P_{r+1}$. This gives some automorphism of $P_{r+1}$ and the missing constants and autmorphisms exactly arise from this. The above proof describes the algebra with precise generators and relations up to the computations of these automorphisms. This also is not a very complicated computation, but would make the proof a bit more messy, so we have chosen to skip these details.
\subsection{Equivalence in the case of $Sl_2$}
In this subsection we prove that the above relation between the algebras governing the categories $\mathcal{U}^{[r]}_0(\mathfrak{g})-mod^{gr}$ and $\mathcal{U}^{[r+1]}_\chi(\mathfrak{g})-mod^{gr}$ that differ by the above given twist, are actually isomorphic algebras.\\
\begin{lem}
	The twists $c^\chi_n$ for $SL_2$ are given by $\sum_{k=0}^{p-1}\frac{(-1)^k}{k!d^\chi_n(d^\chi_n-1)\dots(d^\chi_n-k+1)}e^k\otimes f^k$
\end{lem}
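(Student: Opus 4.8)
The plan is to unwind the definition of $c^\chi_n$ from the previous section in the rank-one case, where it becomes an explicit $\mathfrak{sl}_2$ straightening computation. Write $\mathfrak{n}=ke$ and $\mathfrak{n}_-=kf$. For regular semisimple $\chi\in\mathfrak{h}$ the baby Verma $Z^\chi_n$ has a highest weight vector $1^\chi_n$ with $e\cdot 1^\chi_n=0$ and $h\cdot 1^\chi_n=d^\chi_n\,1^\chi_n$ (here $d^\chi_n$ is by definition this eigenvalue), it is free of rank one over $\mathcal{U}_0(\mathfrak{n}_-)=k[f]/(f^p)$ on the vectors $v_b:=f^b\cdot 1^\chi_n$ for $0\le b\le p-1$, and its dual $(Z^\chi_n)^*$ is free of rank one over $\mathcal{U}_0(\mathfrak{n})=k[e]/(e^p)$ on the functional $\xi:=v_0^*$ of the dual basis; one checks $f\cdot\xi=0$ using $f^p\cdot 1^\chi_n=0$, so $\xi$ is the lowest weight vector, and $\langle\xi,1^\chi_n\rangle=1$, which is the compatible normalization used in the previous section. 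Under these identifications the isomorphism $(Z^\chi_n)^*\otimes Z^\chi_n\cong\mathcal{U}_0(\mathfrak{n})\otimes\mathcal{U}_0(\mathfrak{n}_-)$ sends $e^a\xi\otimes v_b\mapsto e^a\otimes f^b$, and the element $x$ attached to the identity endomorphism is $\sum_{b=0}^{p-1}v_b^*\otimes v_b$; so it suffices to re-expand $x$ in the basis $\{e^a\xi\otimes v_b\}_{0\le a,b\le p-1}$.

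The computation I would then carry out is the following. The antipode of $\mathcal{U}(\mathfrak{sl}_2)$ sends $e\mapsto -e$, so $(e^a\xi)(v_b)=(-1)^a\,\xi(e^af^b\cdot 1^\chi_n)$. The classical straightening identity $e^af^b\cdot 1^\chi_n=\frac{b!}{(b-a)!}\bigl(\prod_{j=0}^{a-1}(d^\chi_n-b+j+1)\bigr)f^{b-a}\cdot 1^\chi_n$ for $b\ge a$, together with the fact that $e^af^b\cdot 1^\chi_n$ has no $1^\chi_n$-component when $b\ne a$ (it is $0$ for $b<a$ and a multiple of $v_{b-a}$ with $b-a\ge 1$ for $b>a$, and there is no wraparound since $f^p\cdot 1^\chi_n=0$), gives $\xi(e^af^b\cdot 1^\chi_n)=\delta_{a,b}\,a!\,d^\chi_n(d^\chi_n-1)\cdots(d^\chi_n-a+1)$. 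Hence $e^a\xi=(-1)^a\,a!\,d^\chi_n(d^\chi_n-1)\cdots(d^\chi_n-a+1)\,v_a^*$; in particular the off-diagonal basis vectors $e^a\xi\otimes v_b$ with $a\ne b$ never occur. Solving for $v_a^*$ and substituting into $x=\sum_b v_b^*\otimes v_b$ yields $c^\chi_n=\sum_{k=0}^{p-1}\frac{(-1)^k}{k!\,d^\chi_n(d^\chi_n-1)\cdots(d^\chi_n-k+1)}\,e^k\otimes f^k$, as claimed.

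The only non-formal point, which I would isolate separately, is that the denominators $d^\chi_n(d^\chi_n-1)\cdots(d^\chi_n-k+1)$ are all nonzero in $k$, so that the formula is meaningful; this is exactly where regular semisimplicity of $\chi$ is used. Indeed $d^\chi_n$ is a root of the Artin--Schreier polynomial $X^p-X-\chi(h)^p$, so the $p$ scalars $d^\chi_n-j$ with $0\le j\le p-1$ are precisely all of its roots, and since $\chi(h)\ne 0$ its constant term $-\chi(h)^p$ is nonzero, so $0$ is not among them; hence every factor occurring in any denominator is invertible. I expect the main bookkeeping nuisance to be fixing conventions — the compatible choice of bases making $\langle\xi,1^\chi_n\rangle=1$ and keeping the antipode signs straight — but once these are pinned down the identity drops out of the $\mathfrak{sl}_2$ straightening rule, with no deeper obstacle.
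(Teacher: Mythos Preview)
Your argument is correct. Both you and the paper work in the same basis $e^k\,1^{-\chi}_{-n}\otimes f^k\,1^\chi_n$ of the weight-zero part of $(Z^\chi_n)^*\otimes Z^\chi_n$, but you extract the coefficients differently. The paper characterises $c^\chi_n$ as the $\mathfrak g$-invariant vector and imposes $e$-annihilation, which yields the two-term recursion $(A^\chi_n)_{k-1}+k(d^\chi_n-k+1)(A^\chi_n)_k=0$ with $(A^\chi_n)_0=1$; solving it gives the stated formula. You instead use the other description of $x$ as the identity tensor $\sum_b v_b^*\otimes v_b$ and compute the change of basis $e^a\xi=(-1)^a a!\,d^\chi_n(d^\chi_n-1)\cdots(d^\chi_n-a+1)\,v_a^*$ by the $\mathfrak{sl}_2$ straightening identity, then invert. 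Your route is a closed-form evaluation rather than a recursion, and it makes transparent why only the diagonal terms $e^k\otimes f^k$ occur. Your Artin--Schreier argument for the nonvanishing of the denominators is also slightly more explicit than the paper's remark that $d^\chi_n$ ``is not an integer''; both use exactly the regularity hypothesis $\chi(h)\ne 0$.
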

\begin{proof}
	Note that the definition of $c^\chi_n$ is given by understanding the trivial submodule of $(Z^\chi_n)^*\otimes Z^\chi_n$. Note that $(Z^\chi_n)^*$ is generated by a lowest weight vector $1^{-\chi}_{-n}$ and $Z^\chi_n$ is generated by a highest weight vector $1^\chi_n$, in such a way that these two pair to 1.\\
	Thus by looking at the weight, we get the canonical pairing is given by an element of the form 
	$$c^\chi_n(1^{-\chi}_{-n}\otimes 1^\chi_n)\sum_{k=0}^{p-1}(A^\chi_n)_k e^k1^{-\chi}_{-n}\otimes f^k1^\chi_{n}$$
	for some constants $(A^\chi_n)_k$.\\
	Here $(A^\chi_n)_0=1$. Further we need the action of $\mathfrak{g}$ on $c^\chi_n(1^{-\chi}_{-n}\otimes 1^\chi_n)$ to be trivial. So checking the condition for $e$ to act trivially, we get the equations
	$$(A^\chi_n)_{k-1}+k(d^\chi_n-k+1)(A^\chi_n)_k=0$$
	Here $d^\chi_n$ is the constant by which $h$ acts on $1^\chi_n$. Note in particular, as $\chi$ is a generic semisimple element, $d^\chi_n$ is not an integer, so we can indeed divide by $d^\chi_n-r$ for $r\in \mathbb{Z}$, so we indeed get the solution as stated in the lemma.
\end{proof}
Now with this explicit twist, we can prove that when applied to the above algebra, it produces an isomoprhic algebra. Hence we get the folowing result.
\begin{thm}
	For $SL_2$ and for $\chi$ a generic semisimple element we have an equivalence of categories
	$$\mathcal{U}^{[r]}_0(\mathfrak{g})-mod^{gr}\cong\mathcal{U}^{[r+1]}_\chi(\mathfrak{g})-mod^{gr}$$
\end{thm}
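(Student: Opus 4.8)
The plan is to combine the structural results already in hand: by the Lemma on projective covers, the indecomposable graded projectives of $\mathcal{U}^{[r+1]}_\chi(\mathfrak{g})$ are $(Z^\chi_\mu)^{(r)}\otimes {}_rP_\lambda$, and those of $\mathcal{U}^{[r]}_0(\mathfrak{g})-mod^{gr}$ are $k^\chi_{p^r\mu}\otimes {}_rP_\lambda$, with the same index set $\lambda\in\Lambda^r$, $\mu\in\Lambda$ (for $SL_2$: $\lambda\in\{0,\dots,p^r-1\}$ written in base $p$, $\mu\in\mathbb{Z}$). So it suffices to produce an isomorphism of the two endomorphism algebras $\mathrm{End}(P)\cong\mathrm{End}(P')$ as \emph{algebras}, not just vector spaces. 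By the previous section the two products differ only by the twist $c^\chi_\mu\in\mathcal{U}_0(\mathfrak{n})\otimes\mathcal{U}_0(\mathfrak{n}_-)$ acting via the $\mathcal{U}(\mathfrak{g})\otimes\mathcal{U}(\mathfrak{g})$-module structure on $\mathrm{End}(P)\otimes\mathrm{End}(P)$ coming from the $G$-module structure established in the presentation subsection. Thus the whole theorem reduces to: \emph{the $c^\chi_n$-twisted product on the explicitly presented algebra is isomorphic to the untwisted one.}

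First I would fix, for each pair of base-$p$ digit sequences, the generating morphisms of $\mathrm{End}(P)$ from the presentation subsection — the single-level maps $P_{r\pm1}\to P_r$ built from the splittings, the $\phi_{\min/\max}$, the $\Omega$'s, and their multilevel composites — and record how $e$ and $f$ act on each generator under the $G$-module (equivalently $SL_2$-triple) structure on $\mathrm{Hom}_{\mathcal{U}^{[r]}_0}({}_rP_\lambda,{}_rP_{\lambda'})$. Since each $\mathrm{Hom}({}_rP_i,{}_rP_j)$ is at most $2$-dimensional over the $G$-module weight (for $SL_2$ the relevant $\mathcal{U}(\mathfrak{g})$-modules $V^{(\lambda,\lambda')}$ are small — copies of $V=L_1$, of $L_0$, and a $3$-dimensional piece), the action of $e^k\otimes f^k$ for $k\ge$ a small bound vanishes, so only finitely many terms of $c^\chi_n=\sum_k \tfrac{(-1)^k}{k!\,d^\chi_n(d^\chi_n-1)\cdots(d^\chi_n-k+1)}\,e^k\otimes f^k$ contribute to any given product of generators. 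Next I would exhibit the rescaling: the twist by $c^\chi_n$ on a product of maps that raises the $\mu$-grading by a fixed amount can be absorbed by rescaling each generator $x$ lying in $\mathrm{Hom}$ between projectives of grades $\mu,\mu'$ by a scalar $\gamma^\chi_{\mu,\mu'}$ depending only on the pair $(\mu,\mu')$ (and on which of the at most two ``types'' of $\mathrm{Hom}$-component it lies in), chosen so that the telescoping of the $\gamma$'s together with the leading ($k=0$) term of $c^\chi$ reproduces the untwisted structure constant. Concretely one solves the cocycle-type condition $\gamma^\chi_{\mu,\mu'}\gamma^\chi_{\mu',\mu''}=\gamma^\chi_{\mu,\mu''}\cdot(\text{twist factor from }c^\chi_{\mu'})$; because the twist factors are built from the single explicit sequence $d^\chi_n(d^\chi_n-1)\cdots$, this system is consistent (it is essentially a single discrete integration), and the genericity of $\chi$ — so that $d^\chi_n-k$ is never zero, exactly as noted in the proof of the Lemma — guarantees every $\gamma$ is a well-defined nonzero scalar. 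Once the $\gamma$'s are in place, the map sending each generator $x\mapsto\gamma\cdot x$ (and extended multiplicatively) is an algebra isomorphism from $(\mathrm{End}(P'),\text{twisted product})$ to $(\mathrm{End}(P),\text{product})$, hence $\mathrm{End}(P)\cong\mathrm{End}(P')$ as algebras, and therefore the module categories are equivalent.

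The main obstacle I anticipate is not the existence of the rescaling but checking that a \emph{single} family of scalars simultaneously works against \emph{every} relation in the presentation — in particular the relations involving $\phi_{\min}$ versus $\phi_{\max}$ on the Steinberg block $P_{p-1}$, where the two maps occupy the two distinct $\mathrm{Hom}$-components and the $k\ge1$ terms of $c^\chi$ genuinely act (this is precisely the spot the presentation subsection flagged as forcing the ``up to constants'' hedging, because the two chosen splittings cannot both be self-adjoint). I would handle this by noting that on that block the extra autoequivalence ``Frobenius twist $\otimes$ Steinberg'' reduces matters to one lower Frobenius kernel, so the needed identity follows by the same induction on $r$ that drives the rest of the argument; the base case $r=1$ is a finite explicit check in $\mathcal{U}^{[2]}_\chi(\mathfrak{sl}_2)$. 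A secondary but purely bookkeeping point is keeping track of the grading shift: the $\mu$-grading on $\mathrm{End}(P)$ must match the weight grading under which $c^\chi_\mu$ is homogeneous, and one must verify the rescaling respects the $T$-grading so that the isomorphism is one of \emph{graded} algebras — this is immediate once the $\gamma$'s are chosen to depend only on grades.
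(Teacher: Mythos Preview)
Your strategy---rescale the generators so that the $c^\chi_n$-twisted product coincides with the untwisted one---is exactly the paper's. But you miss the key structural observation that makes the argument short: in the presentation of $\mathrm{End}(P)$, only the generators of \emph{maximal} level $r$ carry a nontrivial $\mathcal{U}(\mathfrak{g}^{(r)})$-action, and they sit in copies of $L_1$; every lower-level generator (including all the $\phi_{\min}/\phi_{\max}$ maps and the $\Omega$'s) is $G$-invariant, i.e.\ lies in $L_0$. Consequently your ``3-dimensional piece'' does not occur, and the only products the twist can alter are those of two top-level generators. Since $e^2$ already kills $L_1$, only the $k=0,1$ terms of $c^\chi_n$ contribute, and one computes $v_1\tilde\circ v_{-1}=v_1\circ v_{-1}$ while $v_{-1}\tilde\circ v_1=(1-(A^\chi_n)_1)\,v_{-1}\circ v_1$. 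So the rescaling can be confined to the level-$r$ generators alone, by nonzero scalars $D^{\pm,\chi}_n$ (increasing/decreasing, degree $n$); every relation involving at most one level-$r$ generator is then automatically preserved, and the single remaining family of relations (up-then-down versus down-then-up at level $r$) reduces to the recursion
\[
D^{+,\chi}_{n-1}D^{-,\chi}_n \;=\; D^{-,\chi}_{n+1}D^{+,\chi}_n\,\bigl(1-(A^\chi_{n+1})_1\bigr),
\]
which is solvable because $\chi$ generic makes each $1-(A^\chi_{n+1})_1$ nonzero.

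In particular, the obstacle you anticipate at $\phi_{\min}/\phi_{\max}$ on the Steinberg block is a non-issue: those maps are lower-level, hence $G$-invariant, hence untouched by the twist; no induction on $r$ is needed, and no base-case check in $\mathcal{U}^{[2]}_\chi$ is required. Your cocycle framework with scalars $\gamma^\chi_{\mu,\mu'}$ attached to \emph{all} Hom-spaces would eventually work, but it obscures why the system is consistent; once you see that the lower-level $\gamma$'s can all be taken to be $1$, your argument collapses to the paper's.
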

\begin{lem}
	The algebra described in the previous section and its twist by $c^\chi_n$ given above, are isomorphic.
\end{lem}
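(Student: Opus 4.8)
The plan is to upgrade the ``up to constants'' presentation of the algebra $A$ from the previous subsection to an honest isomorphism $A\xrightarrow{\sim} A^{tw}$, where $A^{tw}$ denotes the same underlying graded vector space with product twisted by the $c^\chi_n$ of the preceding lemma (via the $\mathcal U(\mathfrak g)\otimes\mathcal U(\mathfrak g)$-action coming from the extensions of the ${}_rP_\lambda$ to $\mathcal U^{[r+1]}_0(\mathfrak g)$, as in the general theorem of Section 3). Since the relations for $A$ were only pinned down up to nonzero scalars, it suffices to show that $A^{tw}$ admits a presentation by the \emph{same} generators and the \emph{same} monomial relations (the level-$l$ squares, the $\phi_{\min/\max}$ squares, the $\Omega$-defining square, and the same-level relations of Section 3), with possibly different nonzero structure constants, and then to absorb the discrepancy by a grading-preserving rescaling $g\mapsto\gamma_g\, g$ of the generators. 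The last step is routine: each relation in Section 3 equates only a few monomials up to scalars, so matching constants amounts to a sparse, solvable system in the $\gamma_g$ (solvability is exactly the ``no relations among relations of incompatible shape'' phenomenon one uses to normalize a quiver-with-relations presentation), and because the $\gamma_g$ are scalars the resulting isomorphism is automatically compatible with the $T$-grading.

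The computational heart is to control the twist on generators. Writing $c^\chi_n=1\otimes1+\sum_{k=1}^{p-1}(A^\chi_n)_k\,e^k\otimes f^k$ with $(A^\chi_n)_k=(-1)^k/\big(k!\,d^\chi_n(d^\chi_n-1)\cdots(d^\chi_n-k+1)\big)$ --- all coefficients well-defined and nonzero precisely because $d^\chi_n\notin\mathbb Z$ --- one has, for generators $x\in V^{(\lambda,\lambda')}_{\mu-\mu'}$ and $y\in V^{(\lambda',\lambda'')}_{\mu'-\mu''}$, that $x*_{tw}y=x\cdot y+\sum_{k\ge1}(A^\chi_{\mu'})_k\,(e^k\cdot x)\cdot(f^k\cdot y)$, where the dot is the undeformed composition. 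Since $e$ and $f$ are $T$-homogeneous of opposite weights, every term of this sum lies in the single graded Hom space $V^{(\lambda,\lambda'')}_{\mu-\mu''}$; and for $SL_2$ each graded Hom space between indecomposable projectives is, by the block description of $G_{(1)}$-modules tensored up through the Frobenius factors, spanned by one undeformed composite of generators together with its $\Omega$-modification. Hence the correction terms are forced to be linear combinations of words in the generators at the same level --- in many instances they vanish outright for degree reasons (e.g.\ $e^2=0$ on $V=L_1$), and otherwise they feed only into the $\Omega$-part and into the automorphisms $\theta$ that were already left undetermined in Section 3.

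I expect the main obstacle to be precisely this verification: checking, relation by relation in the list of Section 3, that $\sum_{k\ge1}(A^\chi_{\mu'})_k\,(e^k\otimes f^k)$ applied to the relevant pair of generators stays inside the span of the other terms of that diagram and, crucially, never produces a monomial of strictly higher level --- this is what makes the twist a \emph{filtered} (hence trivial) deformation rather than a genuine one, and is what prevents the ``compose in nondecreasing level'' normal form from breaking. With the explicit $c^\chi_n$ and the explicit generators in hand this is a finite, if tedious, check; once it is completed, solving for the rescaling factors $\gamma_g$ (possible since the $(A^\chi_n)_k$ are nonzero and the relations are sparse) yields the isomorphism $A\cong A^{tw}$, which in turn gives the claimed equivalence $\mathcal U^{[r]}_0(\mathfrak g)\text{-}mod^{gr}\cong\mathcal U^{[r+1]}_\chi(\mathfrak g)\text{-}mod^{gr}$ for $SL_2$.
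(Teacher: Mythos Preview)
Your overall strategy---show the twisted algebra satisfies the same relations up to nonzero scalars, then absorb those scalars by rescaling generators---is the paper's strategy as well. What you are missing is the structural observation that collapses your anticipated ``finite, if tedious, check'' to a two-line computation.

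The $\mathcal{U}(\mathfrak g)$-action on $V^{(\lambda,\lambda')}=\mathrm{Hom}_{\mathcal U^{[r]}_0(\mathfrak g)}({}_rP_\lambda,{}_rP_{\lambda'})$ comes from the extensions of the ${}_rP_\lambda$ to $\mathcal U^{[r+1]}_0(\mathfrak g)$-modules. Every generator of level $l<r$ is built from $G$-module maps among the $P_i$ and acts only on tensor factors strictly below the top one; such a map is already $\mathcal U^{[r+1]}_0(\mathfrak g)$-equivariant and hence $G$-fixed, i.e.\ lies in a copy of $L_0$. Only the top-level generators---the maps $k^{(r+1)}_n\otimes P^{(r)}_{k_r}\otimes\cdots\to k^{(r+1)}_{n\pm1}\otimes P^{(r)}_{p-2-k_r}\otimes\cdots$, arising from the $V^{(r+1)}$ factor---carry a nontrivial $G$-action, and they sit in a copy of $L_1$. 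Since $c^\chi_n=1\otimes1+\sum_{k\ge1}(A^\chi_n)_k\,e^k\otimes f^k$ acts as the identity whenever either tensorand lies in $L_0$, \emph{every} product involving a sub-top-level generator is literally unchanged. There is no relation-by-relation verification to perform, no correction feeding into an $\Omega$-part or into the automorphisms $\theta$, and your concern about monomials of higher level is vacuous.

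The only products that deform are those between two level-$r$ generators, both in $L_1$. Because $e^2=f^2=0$ on $L_1$, only the $k=1$ term contributes, and on the weight basis $v_{\pm1}$ one finds $v_1\,\tilde\circ\,v_{-1}=v_1\circ v_{-1}$ while $v_{-1}\,\tilde\circ\,v_1=(1-(A^\chi_n)_1)\,v_{-1}\circ v_1$: the twist is a pure scalar on each such product. Products of two increasing or two decreasing generators are zero before and after the twist, so the single relation affected is the one equating $\psi^-_{n+1}\circ\psi^+_n$ with $\psi^+_{n-1}\circ\psi^-_n$ up to the original constant. Rescaling the level-$r$ increasing/decreasing generators by constants $D^{\pm,\chi}_n$ subject to
\[
D^{+,\chi}_{n-1}D^{-,\chi}_n=(1-(A^\chi_{n+1})_1)\,D^{-,\chi}_{n+1}D^{+,\chi}_n
\]
restores it, and such $D^{\pm,\chi}_n$ exist since $(1-(A^\chi_{n+1})_1)\neq0$ for $\chi$ generic. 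So your ``sparse, solvable system'' is in fact a single first-order recursion in $n$.
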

\begin{proof}
	Note that the algebra given above only has $G$ representations isomorphic to $L_0$ and $L_1$. Further note that the product of some element of $L_0$ and something in any other representation is unchanged after the twist, since any higher term in $\sum_{k=0}^{p-1}\frac{(-1)^k}{k!d^\chi_n(d^\chi_n-1)\dots(d^\chi_n-k+1)}e^k\otimes f^k$ acts trivially on $L_0$.\\
	Hence the only products that change are products of elements in $L_1$.\\
	Note the above generators are part of an $L_1$ for the map $V^{(r+1)}\otimes P_{k_r}^{(r)}\otimes\dots P_{k_0}\rightarrow P_{p-k_r-2}^{(r+1)}\otimes\dots P_{k_0}$ for $k_r\in\Lambda^1$. So only the generators of maximal level $r$ have a non-trivial action of $G$. The other generators are fixed by the action of $G$.\\
	Further, denote the basis of $L_1$, with corresponding weight, by $v_{-1}$ and $v_1$. Denote the composition in the untwisted product by $\circ$ and in the twisted product by $\tilde{\circ}$. Then 
	\begin{enumerate}\item $v_1\tilde{\circ} v_{-1}=v_1\circ v_{-1}$
	\item $v_{-1}\tilde{\circ} v_{1}=v_{-1}\circ v_{1}+(A_n^\chi)_1 ev_{-1}\circ fv_{1}=(1-(A_n^\chi)_1)v_{-1}\circ v_{1}$
	\end{enumerate}, where $n$ depends between which graded piece the map is given, just as the dependence of the twist described in section 3. Note that as $\chi$ generic $(1-(A_n^\chi)_1)$ is a non-zero constant.\\
	To continue first introduce some notation for the algebras. Let $A=End(P)$ and $A'=End(P')$, with $P$ and $P'$ given by the notation of section 3. We have described above a set of generators and relations of $A$. We will pick some generators in $A'$ and we will see they are subject to the same relations as $A$ and hence we get an isomorphism.\\
	To do this we will call the generators $k^{(r+1)}_n\otimes P_{k_r}^{(r)}\otimes\dots \otimes P_{k_0}\rightarrow k^{(r+1)}_{n\pm 1}\otimes P_{p-k_r-2}^{(r+1)}\otimes\dots P_{k_0}$, the generators of level $r$ and degree $n$. The generator corresponding to the $+$ in $\pm$ will de called level $r$ degree $n$ increasing generator and the other one will be decreasing.\\
	For the generators of $A'$ we take exactly the same generators as for $A$, except we scale the level $r$ generators. Namely, scale the level $r$ degree $n$ generators by non-zero constants $D^{\pm,\chi}_n$ depending if it is increasing or decreasing.\\
	These are still generators, as the product and the relations of the elements of degree less than $r$ remains unaltered and by the rule of composition (1), we have that the $\Omega's$ of level $r$ are still generated and thus we get everything.\\
	Now the product between two generators of level less than $r$ remains unaltered, so the relations remain the same. The product between a generator of level $r$ and one of level less than $r$ also remains unaltered. The relations satisfied by these are given by commutative squares where two parallel arrows are of level $r$ both of the same degree and both either decreasing of increasing and some generator of smaller level. Since we scale all generators of level $r$ with same degree and increasing or decreasing by the same constant, we get that this relation is still satisfied, as we have just scaled both sides by the same constant.\\
	The only remaining relations to check are the ones given by two generators of level $r$. The composition of two increasing or two decreasing generators is always zero and remains so after the twist. So the only missing case is the composition of one increasing and one decreasing generator.\\
	Denote by $\psi_n^\pm:k^{(r+1)}_n\otimes P_{(k_r)}^{(r)}\otimes\dots k^{(r+1)}_{n\pm 1}\otimes P_{k_0}\rightarrow P_{p-k_r-2}^{(r+1)}\otimes\dots P_{k_0}$ these generators. The only missing relation to check is that $\psi_{n+1}^-\circ\psi_n^+$ and $\psi_{n-1}^+\circ\psi_n^-$ are equal up to a constant. This constant depends on our choice of maps of one Frobenius kernel, used to define the maps. We need to check that after rescaling they still satisfy the same equation with the same constant. To do this from the condition of twisted composition we just need to satisfy the following equation
	$$D^{+,\chi}_{n-1}D^{-,\chi}_n=D^{-,\chi}_{n+1}D^{+,\chi}_n(1-(A_{n+1}^\chi)_1)$$
	Note that as $(1-(A_{n+1}^\chi)_1)$ are non-zero, we can choose non-zero $D^{\pm,\chi}_n$ to satisfy the above equation. Thus we can choose generators of $A'$ such that they satisfy the same relations as the generators of $A$ and thus these are isomorphic algebras.\\
	The theorem follows immediately from this lemma.
\end{proof}
\subsection{Center of Frobenius kernels of $SL_2$}
In this subsection we compute the center of the Frobenius kernels of $SL_2$.\\
\begin{thm}
	The center of the rth Frobenius kernel of $SL_2$ over a regular block is spanned by the elements
	$$1\otimes 1\dots 1\otimes e_{block}\otimes\Omega_{k_l}\otimes\Omega_{k_{l-1}}\otimes\dots\Omega_{k_0}$$
	and the idempotent of the block. Here $e_{block}$ is an idempotent of a block of the first Frobenius kernel.
\end{thm}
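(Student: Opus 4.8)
The plan is to compute $Z(A)$ for $A=\mathrm{End}(P)$ directly from the generators-and-relations description of the previous subsection, proving the two inclusions separately. Here $P=\bigoplus_{\vec k}{}_rP_{\vec k}$ runs over the indecomposable projectives of the fixed regular block: $\vec k=(k_{r-1},\dots,k_0)$ with $k_0$ in a fixed $G_{(1)}$-linkage pair $\{j,p-2-j\}$ and $k_1,\dots,k_{r-1}$ arbitrary, and by the Lemma (case $\chi=0$) ${}_rP_{\vec k}$ is the tensor product of the first-Frobenius-kernel projectives $P_{k_i}$, Frobenius-twisted, which I order as ${}_rP_{\vec k}=P_{k_{r-1}}^{(r-1)}\otimes\cdots\otimes P_{k_0}$ to match the statement.

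First I would reduce to the diagonal: a central $z$ commutes with each primitive idempotent $e_{\vec k}$, so $z=\sum_{\vec k}z_{\vec k}$ with $z_{\vec k}\in\mathrm{End}({}_rP_{\vec k})$, and by the basis count of the previous subsection this algebra is the tensor product over the factors of the local algebras $k[\Omega_{k_i}]/(\Omega_{k_i}^2)$ (just $k$ when $k_i=p-1$); so $z_{\vec k}=\sum_{T}c_{\vec k,T}\,\Omega^{\vec k}_T$, where $\Omega^{\vec k}_T=\bigotimes_{i\in T}\Omega_{k_i}\otimes\bigotimes_{i\notin T}\mathrm{Id}$ and $T$ ranges over subsets of $\{i:k_i\neq p-1\}$. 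It remains to impose commutation with the explicit generators: the level-$(l{-}1)$ moving maps, the maps $\phi_{\min},\phi_{\max}$, the top single-factor maps, and the chain maps through $P_0\otimes V\supset V^{(1)}\otimes P_{p-1}$.

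Two facts drawn from the diagrams of the previous subsection drive both inclusions. (1) Because $\Omega_{k_i}$ factors as $P_{k_i}\twoheadrightarrow L_{k_i}\hookrightarrow P_{k_i}$ it annihilates radicals and is annihilated by arrows; moreover, composing one of the twisted maps $V^{(1)}\otimes P_k\to P_{p-2-k}$, or its dual, with the pertinent $\Omega$ on the factor it moves gives zero, since the composite either factors through the simple $V^{(1)}\otimes L_k$ — not in the socle of $P_{p-2-k}$ — or has image in a radical. Consequently, pre- or post-composing any level-$m$ generator with an $\Omega$ on the lower factor $m$ that it genuinely moves yields zero. (2) The diamond diagram defining $\theta$ and the $\phi_{\min}/\phi_{\max}$ diagrams show that composing an increasing level-$m$ generator with a decreasing one produces an $\Omega$ on factor $m$ (times identities). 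Fact (1) gives the easy inclusion at once: the block idempotent is central, and for $z=(\text{identities on factors}>l+1)\otimes e_B^{(l+1)}\otimes\Omega_{k_l}^{(l)}\otimes\cdots\otimes\Omega_{k_0}^{(0)}$ one checks, generator by generator, that $zg=gz=0$ when $g$ has level $\le l$ (both sides meet a bottom $\Omega$) while for $g$ of level $\ge l+1$ the bottom factors are untouched and the factors $\ge l+1$ see only identities summed over a whole $G_{(1)}$-block, so the two sides agree; the degenerate case $l=r-1$ (no $e_B$-factor, $\Omega$'s on all of $0,\dots,r-1$) runs the same way.

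The converse I would prove by climbing from factor $0$ upward. Commuting $z$ with a level-$m$ generator and separating tensor factors: on the factors outside $\{m+1,m\}$, where the generator is the identity, the corresponding $\Omega$-data of $z_{\vec k}$ and $z_{\vec k'}$ must match; on factor $m$, fact (1) forces any $\Omega$ to vanish unless it is protected, and running over all moving maps together with the connectivity of the block shows that below the highest factor carrying a nonzero $\Omega$ the $\Omega$-configuration is rigid — fixed indices $k_l,\dots,k_0$ — while above it every factor carries only $\mathrm{Id}$, with a $\vec k$-independent coefficient. Finally, fact (2) applied at the interface forces the coefficient at the factor just above the topmost $\Omega$ to be constant exactly along one $G_{(1)}$-linkage class of that index and forbids an $\Omega$ there; this is the $e_{block}$ pattern, and assembling the surviving possibilities gives the stated spanning set. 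I expect the main obstacle to be the bookkeeping at the Steinberg weights $k_i=p-1$, where $\mathrm{End}(P_{p-1}^{(i)})=k$, the moving maps degenerate into the pair $\phi_{\min},\phi_{\max}$ and the chain maps, and one must both exclude spurious central elements at these walls and verify that having two maps $\phi_{\min},\phi_{\max}$ instead of one arrow produces no new relation. Since the previous subsection only determines the structure constants and the automorphism $\theta$ up to nonzero scalars, the whole argument has to be phrased in terms of which basis monomials appear, not their coefficients — which is legitimate, as centrality is a condition on supports.
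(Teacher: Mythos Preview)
Your approach is essentially the paper's, with the same two ingredients driving both directions: composing a level-$m$ generator with an $\Omega$ on its lower factor kills it (your fact (1), stated verbatim in the paper), and the $\phi_{\min}/\phi_{\max}$ pair together with the diamond relation supply the interface constraint (your fact (2)). The only organizational difference is that the paper runs the converse as an explicit induction on $r$: it splits $z=\sum_k z_k\otimes e_k+y_k\otimes\Omega_k$ along the \emph{bottom} factor, notes that commutation with generators of level $\ge 1$ forces $z_k,y_k$ central in the $(r-1)$st kernel, applies the inductive hypothesis to $y_k$, and then handles $\sum z_k\otimes e_k$ by first walking $k_1$ up to $p-2$ using injectivity of the split inclusions $P_r\hookrightarrow P_{r+1}\otimes V$ and then using $\Omega\circ\phi_{\max}=\phi_{\min}$ (linearly independent from $\phi_{\max}$) to force the remaining coefficients to be block-constant. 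Your ``climbing from factor $0$'' is the unwound form of that same induction, and the obstacle you anticipate at the Steinberg weights is exactly where the paper spends its effort; the paper's device of pushing to $k_1=p-2$ before invoking $\phi_{\max}$ is the concrete manoeuvre you would need to make your interface step precise.
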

\begin{rmk}
	That is the center is given by endomorphisms of the projective, that act by some $\Omega$ on the lowest levels and acts by $1$ on the first factors.\\
	From this you can describe the full center, as the only singular block is over the Steinberg, and recall that the functor $M\mapsto M^{(1)}\otimes St$ is an equivalence of $Rep(G)$ with the block over the Steinberg. Using this inductively we can understand the full center of $SL_2$.
\end{rmk}
\begin{proof}
	First we check that these elements are central.\\
	 Note that if you compose the above element on either side with an element of level $\leq l$ we get 0. This is because pre- or postcomposing with $\Omega$ the map $V^{(1)}\otimes P_k\rightarrow P_{p-2-k}$ gives 0.\\
	 Composing with the above generators of level higher than $l$ is obviously commutative, as both maps act on different tensor functors.\\
	 We thus get the above maps are clearly central, as the idempotent of the block is obviously central.\\
	 Now we will prove these are the only central transformations. To do this we proceed by induction.\\
	 First we describe the center of the first Frobenius kernel. To do this, we can break up the algebra into blocks. For a single block we have $P_{k+2rp}$ and $P_{2rp-k-2}$ are all the projectives. And the only remaining cases are to check if an element $z$ given by a linear combinations of idempotents $\sum \mu_{k+2rp}e_{k+2rp}+\mu_{2rp-2-k}e_{2rp-2-k}$ is central, since all $\Omega$'s are central and these are all endomorphisms of the indecomposable projectives. Consider pre- and postcomposing $z$ with the map $P_{k+2rp}\rightarrow P_{2rp-k-2}$. This forces $\mu_{k+2rp}=\mu_{2rp-2-k}$. Similarly considering $P_{k+2rp}\rightarrow P_{2(r+1)p-k-2}$, we get $\mu_{k+2rp}=\mu_{2(r+1)p-2-k}$. Thus we have $z$ is just a scalar factor of the identity. And hence the center for the first Frobenius kernel is as described above.\\
	 Now we assume the above describes the center of Frobenius kernels up to the $r-1$st Frobenius kernel. Then for the $r$th Frobenius kernel, note that all the endomorphisms of a single indecomposable projective is given by a tensor product of maps $e_k$ and $\Omega_r$. Hence in particular it decomposes as a tensor product of maps only acting on one tensor factor. Thus we can decompose any sum of endomorphisms of these indecomposable as $z=\sum z_k\otimes e_k+y_k\otimes \Omega_k$, for some maps $z_k$ and $y_k$.\\
	 Now assume this element $z$ is central. Then composing with the generators of level higher than 0, we must have $z_k$ and $y_k$ must be central maps of the $r-1$st Frobenius kernel. By induction the $y_k$ are of the above description and hence using the above elements which we already know are central, we just need to consider the elements of the form $\sum z_k\otimes e_k$.\\
	 Suppose $z_k\otimes e_k$ acts by a non-zero transformation on the indecomposable projective $P_{k_r}^{(r)}\otimes\dots P_{k_1}^{(1)}\otimes P_{k_0}$. Such a projective exists unless $z_k=0$. We will denote $k_1=r$ and $k_0=k$ and denote the higher level factors through dots.\\
	 We will now consider the relation of commutativity with the map $\dots P_r^{(1)}\otimes P_k\rightarrow\dots P_{r+1}^{(1)}\otimes P_{p-2-k}$. We know the map $P_r\rightarrow P_{r+1}\otimes V$ used to define this map is injective and the above $z$ acts by the idempotent on $P_k$. Thus precomposing the above with $z$ gives a non-zero map. It follows that the action of $z$ on $\dots P_{r+1}^{(1)}\otimes P_{p-2-k}$ is non-zero.\\
	 Continuing like this we can assume for $r=p-2$, the action is non-trivial.\\
	 Now we can consider the commutativity with the map $\phi_{max}:\dots P_{p-1}^{(1)}\otimes P_{p-2-k}\rightarrow \dots P_{p-2}^{(1)}\otimes P_k$. We can take away multiples of the idempotent of the block to assume that $z_k$ is a linear combination of central elements that act by $\Omega$ on the last factor $P_{p-2}$. Composing $\Omega$ and $\phi_{max}$ give $\phi_{min}$, so this composition with $z$ only contributes functions of the form $g\otimes \phi_{min}$. But on $\dots P_{p-1}^{(1)}\otimes P_{p-2-k}$ the central element $z_k$ can only act non-trivially on higher level terms, so the composition is $f\otimes \phi_{max}$, since $\phi_{max}$ only acts on the bottom two tensor factors. Thus we must have $f=0$, $g=0$, in order to get commutativity, by the description of the relations above. It follows hence that $z$ acts by $0$ on $\dots P_{p-2}^{(1)}\otimes P_k$. this contradict the above. Hence we get a contradiction to the non-zero action on the original projective indecomposable. Note that here, we have taken away a scalar multiple of an idempotent of a block, so up to some elements appearing as summands of these idempotents all others do not appear. In other words, we can reduce $z$ to be a linear combination of tensor products of $e_k$. So these are the idempotents of the indecomposable projectives. But if we have a central element $z$ given as a linear combinations of idempotents, then they have to have the same factor for two indecomposable projective $P$ and $Q$, if there is a non-zero map $P\rightarrow Q$. This precisely gives the idempotents of a block.\\
	 From the above argument hence, we see that every central element is a linear combination of the above and thus the center is described as above.
\end{proof}

\end{document}